\newtheorem{theorem}{Theorem}
\newtheorem{lemma}{Lemma}
\newtheorem{corollary}{Corollary}
\theoremstyle{definition}
\newtheorem{example}{Example}
\newcommand{\la}{\lambda}
\newcommand{\bbN}{\mathbb{N}}
\newcommand{\M}{{\mathcal M}}
\newcommand{\Ga}{{\Gamma}}
\newtheorem{definition}{Definition}
\title{ $H^\infty$-calculus for   semigroup generators on BMO } 
\author{Tim Ferguson\ \ \ Tao Mei\footnote{Research partially supported by the NSF grants DMS-1632435 and DMS-1700171.}\ \ \ \ Brian Simanek}
\begin{document}\maketitle
\begin{abstract} We prove that the  negative generator $L$ of a   semigroup of positive contractions  on $L^\infty$ has bounded $H^\infty(S_\eta)$-calculus on the associated Poisson semigroup-BMO space   for any angle $\eta>\pi/2$, provided  $L$ satisfies Bakry-\'Emery's $\Gamma^2\geq0 $ criterion.  Our arguments  only rely on the properties of the underlying semigroup and works well in  the noncommutative setting.  A key ingredient of our argument is a type of quasi monotone properties for the subordinated  semigroup  $T_{t,\alpha}=e^{-tL^\alpha},0<\alpha<1$, that is proved in the first part of this article.    
\end{abstract}

\maketitle 
\section*{Introduction}
Let $\Delta=-\partial_x^2$ be the negative Laplacian operator on ${\Bbb R}^n$.  The associated  Poisson semigroup of operators  $P_t=e^{-t\sqrt\Delta}, t\geq0$ has many nice properties that make it  a very useful tool in the classical analysis. In particular,  the Poisson semigroup has a quasi monotone property that there exist constants $c_{r,j}$ such that, for any nonnegative function $f\in L^1({\Bbb R}^n,\frac1{1+|x|^2}dx)$, 
 \begin{eqnarray}\label{quasi}
   |t^j\partial^j_t P_tf|\leq c_{r,j}P_{rt}f,
 \end{eqnarray}
 for any $0<r<1,j=0,1,2,...$.
 As a first result of this article, we show that the quasi monotone property (\ref{quasi}) extends to all subordinated semigroups  $T_{t,\alpha}=e^{-t  L^\alpha} $ for all $0<\alpha<1$ if $L$ generates a   semigroup of positive preserving  operators on a Banach lattice $X$. The case of $0<\alpha\leq\frac12$ is easy and is previously known because of a precise subordination formula (see e.g. \cite{M08,JM12}). This type of quasi-monotonicity has been a useful tool in proving certain functional inequalities  (see  \cite{GJL19, M08,JM12, JM10}).
 
Functional calculus is a theory of studying functions of operators. The so-called $H^\infty$-calculus  is a generalization of  the Riesz-Dunford analytic functional calculus and defines $\Phi(L)$ via a Cauchy-type integral for an (unbounded) sectorial operator $L$   and a function $\Phi$ that is  bounded and holomorphic  in a sector $S_\eta$   of the complex plane. 
  $L$ is said to have the bounded $H^\infty$-calculus property if the so-defined $\Phi(L)$ extend to bounded operators on $X$  and  $\|\Phi(L)\| \leq c\|\Phi\|_\infty$ for all such $\Phi$'s.
The theory of bounded $H^\infty$-calculus has developed rapidly in the last thirty years  with many applications and interactions with   harmonic analysis, Banach space theory, and the theory of evolution equations, starting with A. McIntosh's seminal work in 1986 (\cite{Mc86}, \cite{Ha06},\cite{KW04},\cite{We01}).  

It is a major task  in the study of the bounded $H^\infty$-calculus theory  to determine which operators have such a strong property.   Cowling, Duong, and Hieber $\&$ Pr\"uss (\cite{Co83, Du89,HP98}) prove that the infinitesimal generator of a semigroup of positive contractions on $L^p, 1<p<\infty$ always has the bounded $H^\infty(S_\eta)$-calculus on $L^p$ for any $\eta>\frac\pi2$. When the semigroup is symmetric, the angle can be reduced to $\eta>\omega_p=|\frac\pi2-\frac \pi p|$ by interpolation. It is not surprising that this result fails for  $L^\infty$ in general.  One may want to seek a BMO-type space that could be  an appropriate alternative for the $p=\infty$ case. 

The main theorem of this article states that  the negative generator $L$ of a   semigroup of positive contractions   on $L^\infty$ always has bounded $H^\infty(S_\eta)$-calculus on the space BMO$(\sqrt L)$ for any $\eta>\frac\pi2$,   provided  $L$ satisfies Bakry-\'Emery's $\Gamma^2 $ criterion. 
Junge and Mei attempted to prove this result  (see Theorem 3.3 of  \cite{JM12})  under the same assumptions, but only managed to obtain a  bounded $H^\infty(S_\eta)$ ($\eta>\frac\pi2$) calculus result  for $\sqrt L$, instead of $L$. This is due to the fact that  Lemma 3.2 and Theorem 3.3 of \cite{JM12} are proved only for the operator $M_a$ defined for the subordinated Poisson semigroup $P_t=e^{-t\sqrt L}$.   The unknownness of the quasi-monotonicity for general subordinated semigroups $e^{-t  L^\alpha}$ was a major obstacle that prevented Junge and Mei  from reaching further. 
Please note that $L$ is incorrectly written in place of $\sqrt{L}$ in the proof of Corollary 5.4 in \cite{JM12}. Its corrected version is proved  in this article as Corollary \ref{correctLis}. 

The classical BMO norm of a  function $f\in L^1({\Bbb R}^n,\frac1{1+|x|^2}dx)$ can be defined as 
\begin{eqnarray}\label{BMOPoi}
\|f\|_{BMO(\sqrt \Delta)}=\sup_{t>0}  \left\| e^{-t\sqrt\Delta}\left|f-e^{-t\sqrt\Delta}f\right|^2\right\|_{L^\infty}^\frac12.
\end{eqnarray}
BMO spaces associated with semigroup generators have been intensively studied recently (e.g. \cite{DY05} and the subsequent works ). When a cubic-BMO
is available, one  can often compare it with the semigroup BMO  and they are equivalent in many cases.
 In this article, we   consider the BMO$(\sqrt L)$-(semi)norm  studied in \cite{JM12,M08}, which are defined similarly to  (\ref{BMOPoi}), merely replacing  $\Delta$ with the  semigroup generator $L$. The corresponding space BMO$(\sqrt L)$ 
 interpolates well with $L^p$-spaces when the semigroup is symmetric Markovian (see Lemma \ref{JM12}). 

Under the assumptions of our main theorem, we also study semigroup-BMO spaces $BMO(L^\alpha), 0<\alpha<1$ and prove that they are all equivalent. We further prove that the imaginary power $L^{is}$ is bounded on the associated semigroup-BMO space $BMO(L^\alpha)$ with a  bound $\lesssim (1+|s|)^{|\frac32|}\exp(|\frac {|\pi s|}2|)$ (see (\ref{Lis}),(\ref{Lisp})). This   complements  Cowling's $L^p$-estimate (see \cite[Corollary 1]{Co83})  and fixes a mistake in  \cite{JM12} (see the Remark at the end of Section 3).

The related topics and  estimates on semigroup generators have been studied with  geomtric/metric assumptions on  the underlying measure space. This article is from a functional analysis point of view and tries to obtain a general result  by abstract arguments. Cowling and Hieber/Pr\"uss's method for their $H^\infty$-calculus results on $L^p$ is  based on the transference techniques of Coifman and Weiss, which does not work for non-UMD Banach spaces, such as BMO. Our method is to consider the fractional power of the generator to take   advantage of the quasi-monotone property (\ref{quasi}). Our argument works well for the noncommutative case, that is for $L$ that generates a semigroup of completely positive contractions on a semifinite von Neumann algebra. 

We analyze a few examples to illustrate our results  and demonstrate their applications to  Fourier multipliers on non-classical $L^p$ spaces  at the end of the article. We use $c$ for an absolute constant which may differ from line to line.

\section{The complete monotonicity of a difference of exponential power functions}

A nonnegative $C^\infty$-function $f(t)$ on $(0,\infty)$ is {\it completely monotone} if $$(-1)^k\partial_t^kf(t)\geq0$$ for all $t$. Easy examples are 
$f(t)=e^{-\la t}$ for any $\la>0$. It is well-known that completely monontonicity is preserved by addition, multiplication, and taking pointwise limits. So the Laplace transform  of a positive Borel measure on $[0,\infty)$, which is an average of  $e^{-\la t}$ in $\la$, is completely monotone.
The Hausdorff-Bernstein-Widder Theorem says that the reverse is also true; namely that a function is   completely monotone if and only if it is the Laplace transform  of a positive Borel measure on $[0,\infty)$. 
In particular, $g_s(t)=e^{-st^\alpha}$ is completely monotone and is the Laplace transform of a positive integrable $C^\infty$ function $\phi_{s,\alpha}$ on $(0,\infty)$ for all $s>0,0<\alpha<1$.
\begin{eqnarray}\label{est}
e^{-st^\alpha}=\int_0^\infty e^{-\la t}\phi_{s,\alpha}(\la)d\la=\int_0^\infty e^{-s^\frac1\alpha\la t}\phi_{1,\alpha}(\la)d\la.
\end{eqnarray}
The function $\phi_{s,\alpha}$ is  uniquely  determined   by the inverse Laplace  transform 
\begin{eqnarray}\label{rest}
\phi_{s,\alpha}(\la)=s^{-\frac1\alpha }\phi_{1,\alpha}(s^{-\frac1\alpha}\la)={\mathcal L}^{-1}(e^{-sz^\alpha})(\la)=\frac1{2\pi i}\int_{\sigma-i\infty}^{\sigma+i\infty} e^{z\la}e^{-s z^\alpha}dz, 
\end{eqnarray}
for $\sigma>0,\la>0$. The derivative $\partial_s\phi_{s,\alpha}$ is again an integrable function (see e.g. {\cite[page 263]{Y80}), and 
\begin{eqnarray}\label{dest}
-t^{\alpha}e^{-st^\alpha}=\int_0^\infty e^{-\la t}\partial_s\phi_{s,\alpha}(\la)d\la.
\end{eqnarray}
The properties of $\phi_{s,\alpha}$ are important in the study of the fractional powers of   semigroup generators. 

The goal of this section is to prove  a few pointwise inequalities  for $\phi_{s,\alpha}$, which  will  be used in the next section. For that purpose, we first prove the complete monotonicity of several variants of $e^{-st^\alpha}$.

For $k,n\in {\Bbb N}, 1\leq k\leq n$, let $a_k^{(n)}$ be the real coefficients in the expansion  
\[
\frac{d^n}{dt^n} e^{-t^\alpha} = (-1)^n
\sum_{k=1}^n a_k^{(n)} t^{-n+k\alpha} e^{-t^\alpha}.
\]
 It is easy to see that
\[
\frac{d^n}{dt^n} e^{-ct^\alpha} = (-1)^n
\sum_{k=1}^n c^k a_k^{(n)} t^{-n+k\alpha} e^{-c t^\alpha}.
\]
 
\noindent{\bf Convention:} We define $a_k^{(n)} = 0$ if $k > n$ or $k\leq 0$. 

\medskip

The proof of the following lemma is simple and elementary. We leave it for the reader to verify. 

\begin{lemma}
The $a_k^{(n)}$'s satisfy the relation 
\begin{eqnarray}\label{basic}
a_{k}^{(n+1)} = (n-k\alpha)  a_k^{(n)} + \alpha a_{k-1}^{(n)}
\end{eqnarray}
for all $k\in {\Bbb Z}, n\in {\Bbb N}$.  
\end{lemma}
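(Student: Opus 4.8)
\emph{Proof proposal.} The plan is to prove the recurrence (\ref{basic}) by induction on $n$, noting that the very same induction simultaneously establishes that an expansion of the asserted form
\[
\frac{d^n}{dt^n} e^{-t^\alpha} = (-1)^n \sum_{k=1}^n a_k^{(n)} t^{-n+k\alpha} e^{-t^\alpha}
\]
exists, so there is nothing to set up separately. First I would dispatch the base case $n=1$: differentiating directly gives $\frac{d}{dt}e^{-t^\alpha} = -\alpha\, t^{\alpha-1} e^{-t^\alpha}$, hence $a_1^{(1)} = \alpha$ and $a_k^{(1)} = 0$ for $k \ne 1$, consistent with the convention $a_k^{(n)} = 0$ when $k\le 0$ or $k>n$.

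For the inductive step, assume the displayed expansion holds at level $n$. I would differentiate it term by term using
\[
\frac{d}{dt}\bigl(t^{-n+k\alpha} e^{-t^\alpha}\bigr) = (-n+k\alpha)\, t^{-(n+1)+k\alpha} e^{-t^\alpha} \;-\; \alpha\, t^{-(n+1)+(k+1)\alpha} e^{-t^\alpha},
\]
which yields
\[
\frac{d^{n+1}}{dt^{n+1}} e^{-t^\alpha} = (-1)^{n+1} \sum_{k=1}^{n} a_k^{(n)} \Bigl[ (n-k\alpha)\, t^{-(n+1)+k\alpha} + \alpha\, t^{-(n+1)+(k+1)\alpha} \Bigr] e^{-t^\alpha}.
\]
Re-indexing the second contribution by $k \mapsto k-1$ and collecting the coefficient of $t^{-(n+1)+k\alpha}$ for each $k$ produces exactly $(n-k\alpha) a_k^{(n)} + \alpha\, a_{k-1}^{(n)}$; this both identifies $a_k^{(n+1)}$ (and shows the expansion at level $n+1$ has the claimed shape, with the sum running $1\le k\le n+1$) and proves (\ref{basic}).

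The comparison of coefficients is legitimate because, since $0<\alpha<1$, the exponents $-n+k\alpha$ are pairwise distinct (indeed strictly increasing in $k$), so the functions $t^{-n+k\alpha}e^{-t^\alpha}$ are linearly independent. The only genuine point of care — the closest thing to an obstacle in an otherwise routine argument — is the boundary bookkeeping: the $k=1$ term of the first sum and the $k=n+1$ term arising from the shifted second sum must be reconciled with the vanishing convention for $a_k^{(n)}$, so that (\ref{basic}) holds verbatim for all $k\in{\Bbb Z}$ rather than only in the range $1\le k\le n$. Once that is checked, the induction closes.
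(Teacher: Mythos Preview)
Your argument is correct and is precisely the elementary computation the paper has in mind; the authors omit the proof entirely, writing that it is ``simple and elementary'' and left to the reader, so your induction-and-coefficient-matching is exactly the intended verification.
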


\begin{lemma}\label{thm:akn-subexp}
Let $K_i,i=1,2$ be the first  integer $m$ such that $\frac m{m+i} \geq \alpha$.  
Then, for all $   j \in {\Bbb Z}, n\in {\Bbb N}$, we have
 
\begin{eqnarray}
a_{k+j}^{(n)} - (j+1) a_{k+j+1}^{(n)}&\geq&0 \ \ {\rm if}\ k\geq K_1 \label{key}\\
(j+1) ( a_{k+j+1}^{(n)}-(j+2)a_{k+j+2}^{(n)})&\leq& a_{k+j}^{(n)}-(j+1)a_{k+j+1}^{(n)} \ \ {\rm if}\ k\geq K_2 \label{key2}
\end{eqnarray} 

\end{lemma}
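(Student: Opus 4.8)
The plan is to establish (\ref{key}) and then (\ref{key2}), in both cases by induction on $n$ and in both cases driven entirely by the recursion (\ref{basic}); throughout write $p:=k+j$. I first record two elementary facts. (a) $a_k^{(n)}\ge0$ for all $k,n$, and $a_k^{(n)}>0$ exactly when $1\le k\le n$: this follows from (\ref{basic}) by a one-line induction, because whenever $a_k^{(n)}\ne0$ one has $1\le k\le n$, so the multiplier $n-k\alpha\ge n(1-\alpha)>0$ and every term on the right of (\ref{basic}) is nonnegative. (b) Both inequalities hold trivially at level $n+1$ when $j\le-1$ or when $p\ge n+1$: in the first case the coefficients $-(j+1)$, resp.\ $1,\,-2(j+1),\,(j+1)(j+2)$, multiplying the (nonnegative) $a^{(n+1)}$'s are all $\ge0$; in the second case $a_{p+1}^{(n+1)}=a_{p+2}^{(n+1)}=0$, so the left sides collapse to $a_p^{(n+1)}\ge0$. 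Consequently the induction step only has to be carried out in the regime $j\ge0$, $p=k+j\le n$, in which one automatically has $k\le p\le n$. The base case $n=1$, where only $a_1^{(1)}=\alpha$ is nonzero, is then a one-line check.

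For (\ref{key}), write $E_j^{(m)}:=a_{k+j}^{(m)}-(j+1)a_{k+j+1}^{(m)}$, so the assertion at level $m$ is $E_j^{(m)}\ge0$ for all $j$. Substituting (\ref{basic}) into $E_j^{(n+1)}$ and using the induction hypothesis in its shifted form $a_{p-1}^{(n)}\ge j\,a_p^{(n)}$ (which is exactly $E_{j-1}^{(n)}\ge0$) to bound the term $\alpha a_{p-1}^{(n)}$ from below, the $a_p^{(n)}$-contributions collect into a common factor and one obtains the clean estimate
\[
E_j^{(n+1)}\ \ge\ \bigl(n-(p+1)\alpha\bigr)\,E_j^{(n)} .
\]
In the working regime $k\le p\le n$ the definition of $K_1$ (together with monotonicity of $K\mapsto\frac{K}{K+1}$) gives $\alpha\le\frac{k}{k+1}$, and since $k(p+1)-n(k+1)=k(p-n)-(n-k)\le0$ we get $\alpha\le\frac{k}{k+1}\le\frac{n}{p+1}$, i.e.\ $n-(p+1)\alpha\ge0$; together with $E_j^{(n)}\ge0$ this closes the step.

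For (\ref{key2}), rearranging shows it asserts $F_j^{(m)}\ge0$, where $F_j^{(m)}:=a_{k+j}^{(m)}-2(j+1)a_{k+j+1}^{(m)}+(j+1)(j+2)a_{k+j+2}^{(m)}$; no appeal to (\ref{key}) is needed. Substituting (\ref{basic}) into $F_j^{(n+1)}$ and regrouping should yield the exact identity
\[
F_j^{(n+1)}\ =\ \alpha\,F_{j-1}^{(n)}+\bigl(n-(p+2)\alpha\bigr)\,F_j^{(n)} .
\]
In the working regime $k\le p\le n$ the definition of $K_2$ gives $\alpha\le\frac{k}{k+2}$, and $k(p+2)-n(k+2)=k(p-n)-2(n-k)\le0$, hence $\alpha\le\frac{k}{k+2}\le\frac{n}{p+2}$, i.e.\ $n-(p+2)\alpha\ge0$; since $F_{j-1}^{(n)},F_j^{(n)}\ge0$ by the induction hypothesis, the identity gives $F_j^{(n+1)}\ge0$.

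The only real content --- and the only place the thresholds $K_1,K_2$ genuinely enter --- is the chain $\alpha\le\frac{k}{k+i}\le\frac{n}{p+i}$ in the working regime: the "semigroup-type" positivity of the multipliers $n-(p+1)\alpha$ and $n-(p+2)\alpha$ that powers the induction would break down for $\alpha$ close to $1$, but the hypothesis $k\ge K_i$ forces $k$ (hence $p$, in that regime) large enough relative to $\alpha$ to restore it. The remaining tasks --- carrying out the two collapse/identity computations and confirming that the shifted hypotheses $E_{j-1}^{(n)}\ge0$ and $F_{j-1}^{(n)}\ge0$ are legitimately available for every $j\in\mathbb{Z}$ --- are routine.
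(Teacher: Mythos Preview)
Your argument is correct and is essentially the paper's own proof in different clothing. The paper sets $f_n(j)=j!\,a_{k+j}^{(n)}$ and shows $Df_n\le 0$ and $D^2f_n\ge 0$ via the recursions $D^i f_{n+1}(j)=(n-(k+j+i)\alpha)D^i f_n(j)+\alpha j\,D^i f_n(j-1)$; since $Df_n(j)=-j!\,E_j^{(n)}$ and $D^2f_n(j)=j!\,F_j^{(n)}$, these are exactly your identities $E_j^{(n+1)}=(n-(p+1)\alpha)E_j^{(n)}+\alpha E_{j-1}^{(n)}$ and $F_j^{(n+1)}=(n-(p+2)\alpha)F_j^{(n)}+\alpha F_{j-1}^{(n)}$ (so your ``$\ge$'' for $E$ is in fact an equality, and your ``should yield'' for $F$ does hold). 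The boundary bookkeeping differs only cosmetically: the paper starts the induction at $n=K_i$ and invokes the vanishing $a_m^{(n)}=0$ for $m>n$, while you start at $n=1$ and peel off the range $p\ge n+1$ at each step; and your verification that $n-(p+i)\alpha\ge 0$ via $\alpha\le\tfrac{k}{k+i}\le\tfrac{n}{p+i}$ is exactly the paper's condition ``$n+1\ge K_i+j+1$'' unpacked.
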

\begin{proof} We only need to prove the case $j\geq0$. Let $D$ be the right   derivative for discrete functions:
$Df=f(j+1)-f(j)$. It is easy to see that  the product rule holds  $D(jf)(j)=jD_jf(j)+f(j+1)$. Fix $k\in {\Bbb Z}$. Let \begin{eqnarray}
\label{fn}
f_n(j)=a_{k+j}^{(n)}j!
\end{eqnarray} for $j \geq0$, where we use the convention that $0!=1$.
By  (\ref{basic}), we have
$$f_{n+1}(j)=(n-(k+j)\alpha)f_n(j)+\alpha jf_n(j-1),$$
for all $j\geq 1$ and $f_{n+1}(0)=(n-k\alpha)f_n(0)+\alpha a_{k-1}^{(n)}$.
Taking the discrete derivative on  both sides, we get
\begin{eqnarray}
 Df_{n+1}(j) &=&(n-(k+j)\alpha)Df_n(j)-\alpha f_n(j+1)+\alpha jDf_n(j-1)+\alpha f_n(j)\nonumber \\
 &=&(n-(k+j+1)\alpha)Df_n(j) +\alpha jDf_n(j-1). \label{newD}
  \end{eqnarray}
  for $j\geq1$ and $ Df_{n+1}(0) =(n-(k+1)\alpha)Df_n(0) -\alpha a_{k-1}^{(n)}.$ 
  By induction, we get
  \begin{eqnarray}
 D^if_{n+1}(j) =(n-(k+j+i)\alpha)D^if_n(j) +\alpha jD^if_n(j-1).\label{newD2}
  \end{eqnarray}
  for all $i\geq1,j\geq1$ and $ D^if_{n+1}(0) =(n-(k+i)\alpha)D^if_n(0) +(-1)^i\alpha a_{k-1}^{(n)}.$ 
  
  Let $k=K_1$ in (\ref{fn}). Note that  the condition $Df_n(j)\leq 0$  trivially holds for $n\leq K_1+j$  because $a_{i}^{(j)}=0$ for $i>j$. In particular, $Df_n(j)\leq 0$  for all $j\geq0, n= K_1 $. We apply induction on $n$. Assume $Df_n(j)\leq 0$ holds for all $j\geq0$.  The equality (\ref{newD}) implies that $Df_{n+1}(j)\leq 0$ for all  $j\geq0$ satisfying $n\geq (K_1+j+1)\alpha$, which holds if  $ n+1\geq K_1+j+1$ since $\frac n{n+1}\geq \alpha$.  On the other hand, if $n+1\leq K_1+j$ we have $Df_{n+1}(j)\leq 0$ trivially. So $Df_{n+1}(j)\leq 0$ for all $j\geq0$. Therefore,  $Df_{n}(j)\leq 0$ and equivalently  (\ref{key}) holds for all $n\in {\Bbb N}, j\geq0.$

The argument for (\ref{key2}) is similar. Let $k=K_2$ in (\ref{fn}). Note that $D^2f_n(j)\geq 0$ is equivalent to (\ref{key2}) for $j\geq0$, which  trivially holds for   $n\leq K_2+j$ since $K_2\geq K_1$ and $a_{K_2+j}^{(n)}-(j+1)a_{K_2+j+1}^{(n)}\geq0$. In particular,   (\ref{key2}) holds for  $n= K_2, j\geq0$.  Assume   that (\ref{key2}) holds for  $n= m,  j\geq0  $.    We consider the case $ n=m+1$. If  $n=m+1\leq K_2+j$, (\ref{key2}) holds   trivially.  Otherwise, $m+1\geq K_2+j+1 $ and by applying (\ref{newD2}) we see that $D^2f_{n+1}\geq 0$. By induction, (\ref{key2}) holds for all $n\in {\Bbb N},j\geq0.$
  \end{proof}

\medskip
\noindent {\bf  Remark.} The argument of the previous lemma shows that $(-1)^iD^if_n(j)\geq 0$ for all $n\in{\Bbb N},j\geq0$ if we choose $k $ so that $\frac k{k+i}\leq \alpha$.

\medskip


For a fixed $K\geq K_1$, let  
\begin{eqnarray}\label{Fn}
F_n(x)=x^{-K}\sum_{j=1}^{n}a_j^{(n)}x^j=\sum_{j=-\infty}^\infty a_{K+j}^{(n)} x^j.
\end{eqnarray}
 and for a fixed $K\geq K_2$, let  
\begin{eqnarray}\label{Gn}G_n(x)=x^{-K}\sum_{j=1}^{n+1}(a_{j-1}^{(n)}-(j-K)a_{j}^{(n)})x^{j-1}=\sum_{j=-\infty}^\infty (a_{K+j-1}^{(n)}-ja_{K+j}^{(n)}) x^{j-1}.
\end{eqnarray}
\begin{lemma}\label{thm:subexp-doublegrowth}
Let $f(x)=F_n(x)$, or $G_n(x)$ for the given suitable $K$. We have $(f(x)e^{-x})'\leq 0$ and $f(x+rx) \leq e^{rx} f(x)$ 
for all $r,x > 0$. 
\end{lemma}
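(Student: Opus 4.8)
The plan is to extract both assertions directly from the coefficient inequalities of Lemma~\ref{thm:akn-subexp}, reducing everything to the single estimate $(f(x)e^{-x})'\le 0$. Indeed, the second assertion follows formally from the first: both $F_n$ and $G_n$ are finite Laurent polynomials, hence smooth on $(0,\infty)$, and if $g(y):=f(y)e^{-y}$ is non-increasing on $(0,\infty)$, then since $(1+r)x\ge x$ we get $f((1+r)x)e^{-(1+r)x}\le f(x)e^{-x}$, and multiplying by $e^{(1+r)x}$ gives $f(x+rx)=f((1+r)x)\le e^{rx}f(x)$. So it is enough to prove $(f(x)e^{-x})'\le 0$, i.e.\ $f'(x)\le f(x)$ on $(0,\infty)$, for $f=F_n$ and for $f=G_n$.

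For $f=F_n(x)=\sum_j a_{K+j}^{(n)}x^j$, I would differentiate and shift the index to get $F_n'(x)-F_n(x)=-\sum_j\big(a_{K+j}^{(n)}-(j+1)a_{K+j+1}^{(n)}\big)x^j$. Every coefficient is $\ge 0$: for $j\ge 0$ this is exactly (\ref{key}) with $k=K\ge K_1$, and for $j<0$ it is immediate once one records that $a_k^{(n)}\ge 0$ for all $k,n$, which follows by induction on $n$ from (\ref{basic}) together with $0<\alpha<1$ (the factor $n-k\alpha\ge n(1-\alpha)>0$ whenever $1\le k\le n$, and $a_k^{(n)}=0$ otherwise). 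Since $x^j>0$ on $(0,\infty)$, this yields $F_n'\le F_n$. For $f=G_n(x)=\sum_j c_j x^j$ with $c_j=a_{K+j}^{(n)}-ja_{K+j+1}^{(n)}$ and $K\ge K_2$, the same scheme reduces matters to checking $c_j-(j+1)c_{j+1}\ge 0$, and the crucial step is the algebraic identity
\[
c_j-(j+1)c_{j+1}=\Big(a_{K+j}^{(n)}-2(j+1)a_{K+j+1}^{(n)}+(j+1)(j+2)a_{K+j+2}^{(n)}\Big)+\Big(a_{K+j+1}^{(n)}-(j+1)a_{K+j+2}^{(n)}\Big),
\]
whose first summand is $\ge 0$ by (\ref{key2}) with $k=K\ge K_2$, and whose second summand equals $c_{j+1}$, which is $\ge 0$ by (\ref{key}) applied with $k=K+1\ge K_1$ (again using $a_k^{(n)}\ge 0$ for the terms with $j<0$).

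The argument is essentially bookkeeping, so the one genuinely delicate point I expect is hitting on the right way to split $c_j-(j+1)c_{j+1}$ into a piece governed by (\ref{key2}) and a piece governed by (\ref{key}); this splitting is exactly what forces the stronger hypothesis $K\ge K_2$ for $G_n$ rather than merely $K\ge K_1$. A minor trap worth flagging is that when $K$ is large the Laurent expansions of $F_n$ and $G_n$ genuinely involve negative powers of $x$, so the $j<0$ cases of the coefficient inequalities --- equivalently, the positivity of all the $a_k^{(n)}$ --- really must be invoked and should be stated explicitly.
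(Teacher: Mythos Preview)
Your proof is correct and follows exactly the route the paper takes: reduce both claims to $f'\le f$ via Lemma~\ref{thm:akn-subexp}, and deduce the growth estimate from monotonicity of $f(x)e^{-x}$. The paper's own proof is a single sentence (``$f-f'\ge 0$ by Lemma~\ref{thm:akn-subexp}''), so you have simply unpacked the coefficient bookkeeping that the paper leaves implicit; in particular, your algebraic splitting of $c_j-(j+1)c_{j+1}$ into a piece controlled by (\ref{key2}) and the residual $c_{j+1}$ controlled by (\ref{key}) is precisely the content hidden behind that one line, and your explicit treatment of the $j<0$ terms via $a_k^{(n)}\ge 0$ matches the paper's tacit ``we only need $j\ge 0$'' reduction in Lemma~\ref{thm:akn-subexp}.
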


\begin{proof} It is easy to see that $f(x)-f'(x)\geq0$ for $x>0$ by Lemma \ref{thm:akn-subexp}. So
$(f(x)e^{-x})'=(f'-f)e^{-x}\leq0$ and hence   $f(x+rx)  \leq e^{rx} f(x)$ for $r>0$.
\end{proof}

We now come to the main result of this section. 
\begin{theorem}\label{main1}
Let $0 < \alpha$, $c < 1$, and $s\geq 0$ be fixed.
Then  \begin{itemize}
\item[(i)] $e^{-cst^{\alpha}} - c^{K_1}e^{-s t^{\alpha}}$ is completely monotone in $t$.
\item[(ii)] $K_1e^{-st^{\alpha}} +st^\alpha e^{-st^{\alpha}} $ is completely monotone in $t$. 

\item[(iii)]  $\frac1{c^{K_2}(1-c) } e^{-cst^{\alpha}} -st^\alpha e^{-st^{\alpha}} $ is completely monotone in $t$. 
\item[(iv)]  $(\max\{\frac {jK_1}{c^{K_1}}, \frac j{c^{K_2}(1-c)}\})^j e^{-cst^{\alpha}} \pm s^jt^{j\alpha} e^{-st^{\alpha}} $ are completely monotone in $t$ for any $j\in {\Bbb N}$.
\end{itemize}
\end{theorem}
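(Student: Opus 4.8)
\medskip
\noindent\emph{Proof plan.} For each item the plan is to show that $(-1)^n\partial_t^n$ of the displayed function is $\ge 0$ for all $n\ge 0$. The cases $n=0$ are immediate: in (i), $e^{-cst^\alpha}\ge e^{-st^\alpha}\ge c^{K_1}e^{-st^\alpha}$; in (ii) the function is $(K_1+st^\alpha)e^{-st^\alpha}\ge 0$; in (iii), writing $st^\alpha e^{-st^\alpha}=\big((st^\alpha)e^{-(1-c)st^\alpha}\big)e^{-cst^\alpha}$ and using $\sup_{u>0}ue^{-(1-c)u}=\tfrac1{e(1-c)}\le\tfrac1{c^{K_2}(1-c)}$; and the $n=0$ case of (iv) is contained in the bootstrap below. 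Fix $n\ge 1$ from now on, write $y=st^\alpha$, and set $g_n:=F_n-F_n'$ with $F_n$ as in (\ref{Fn}). The two formulas I will use are, for any $K\ge K_1$ and any $\mu>0$,
\[
(-1)^n\partial_t^n e^{-\mu t^\alpha}=t^{-n}e^{-\mu t^\alpha}(\mu t^\alpha)^{K}F_n(\mu t^\alpha),
\]
which is the expansion of $\partial_t^n e^{-\mu t^\alpha}$ preceding Lemma~\ref{thm:akn-subexp} rewritten via $(\mu t^\alpha)^{K}F_n(\mu t^\alpha)=\sum_k a_k^{(n)}(\mu t^\alpha)^k$, and its companion
\[
(-1)^n\partial_t^n\big(st^\alpha e^{-st^\alpha}\big)=t^{-n}e^{-y}y^{K}\big[(y-K)F_n(y)-yF_n'(y)\big]=t^{-n}e^{-y}y^{K}\big[y\,g_n(y)-K\,F_n(y)\big],
\]
obtained by applying $-\partial_\beta|_{\beta=1}$ to $e^{-\beta st^\alpha}$, using $\partial_\beta\big(e^{-\beta y}\Psi(\beta y)\big)=y\,e^{-\beta y}(\Psi'-\Psi)(\beta y)$ with $\Psi(x)=x^{K}F_n(x)$, and $(x-K)F_n-xF_n'=xg_n-KF_n$. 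By Lemma~\ref{thm:akn-subexp}, $g_n$ has nonnegative coefficients ((\ref{key}) with $k=K$), and if moreover $K\ge K_2$ then so does $g_n-g_n'$ ((\ref{key2}) with $k=K$); hence $g_n\ge 0$ and $\big(g_n(x)e^{-x}\big)'\le 0$ on $(0,\infty)$. The single analytic identity needed is
\[
e^{-cy}F_n(cy)=e^{-y}F_n(y)+\int_{cy}^{y}e^{-u}g_n(u)\,du,
\]
which follows from $\partial_c\big(e^{-cy}F_n(cy)\big)=-y\,e^{-cy}g_n(cy)$ (equivalently, one integration by parts).

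Granting this, (i) with $K=K_1$ is immediate, since $(-1)^n\partial_t^n\big[e^{-cst^\alpha}-c^{K_1}e^{-st^\alpha}\big]=c^{K_1}y^{K_1}t^{-n}\int_{cy}^{y}e^{-u}g_n(u)\,du\ge 0$. For (ii) with $K=K_1$, add $K_1(-1)^n\partial_t^n e^{-st^\alpha}$ to the companion formula: the $-K_1F_n$ term cancels, leaving $y^{K_1+1}t^{-n}e^{-y}g_n(y)\ge 0$. For (iii) take $K=K_2$; then $\int_{cy}^{y}e^{-u}g_n(u)\,du\ge(1-c)\,y\,e^{-y}g_n(y)$ by monotonicity of $u\mapsto g_n(u)e^{-u}$, so the identity gives
\[
\frac{e^{-cy}}{1-c}F_n(cy)\ \ge\ \frac{e^{-y}F_n(y)}{1-c}+y\,e^{-y}g_n(y)\ \ge\ e^{-y}\big[y\,g_n(y)-K_2F_n(y)\big],
\]
and multiplying by $y^{K_2}t^{-n}$ together with $c^{K_2}\cdot\tfrac1{c^{K_2}(1-c)}=\tfrac1{1-c}$ yields $(-1)^n\partial_t^n\big[\tfrac1{c^{K_2}(1-c)}e^{-cst^\alpha}-st^\alpha e^{-st^\alpha}\big]\ge 0$.

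For (iv) I bootstrap from (i)--(iii). Put $M:=\max\{K_1c^{-K_1},\,(c^{K_2}(1-c))^{-1}\}$, so that $\max\{jK_1c^{-K_1},\,j(c^{K_2}(1-c))^{-1}\}=jM$. For every $\nu\ge 0$, both $a:=Me^{-c\nu t^\alpha}+\nu t^\alpha e^{-\nu t^\alpha}$ and $b:=Me^{-c\nu t^\alpha}-\nu t^\alpha e^{-\nu t^\alpha}$ are completely monotone: $a=(M-K_1c^{-K_1})e^{-c\nu t^\alpha}+K_1c^{-K_1}\big(e^{-c\nu t^\alpha}-c^{K_1}e^{-\nu t^\alpha}\big)+\big(K_1e^{-\nu t^\alpha}+\nu t^\alpha e^{-\nu t^\alpha}\big)$ is a nonnegative combination of completely monotone functions by (i) and (ii), and likewise for $b$ via (iii). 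Taking $\nu=s/j$ gives $Me^{-c\nu t^\alpha}=\tfrac{a+b}2$ and $\nu t^\alpha e^{-\nu t^\alpha}=\tfrac{a-b}2$, whence $M^je^{-cst^\alpha}=(\tfrac{a+b}2)^j$ and $j^{-j}s^jt^{j\alpha}e^{-st^\alpha}=(\tfrac{a-b}2)^j$, so
\[
M^je^{-cst^\alpha}\pm j^{-j}s^jt^{j\alpha}e^{-st^\alpha}=\Big(\tfrac{a+b}2\Big)^{j}\pm\Big(\tfrac{a-b}2\Big)^{j}=\frac1{2^{j-1}}\sum_{\substack{0\le i\le j\\ i\ \text{even (resp.\ odd)}}}\binom{j}{i}a^{\,j-i}b^{\,i},
\]
which is completely monotone because products and nonnegative combinations of completely monotone functions are; multiplying by $j^j$ gives (iv).

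I expect (iii) to be the delicate point. The whole purpose of $K_2$ — equivalently, of the second‑order estimate (\ref{key2}), hence ultimately of the $\Gamma^2\ge 0$ type hypothesis — is to guarantee $\big(g_n(x)e^{-x}\big)'\le 0$, which is exactly what converts the integral identity into the sharp constant $\tfrac1{c^{K_2}(1-c)}$; cruder substitutes (bounding $F_n(y)$ by $e^{(1-c)y}F_n(cy)$, or using only $G_n\ge 0$ from (\ref{Gn})) lose a factor that grows with $n$ and do not give a constant uniform in $n$. A minor point to keep in mind throughout is that for $K\ge 2$ the ``polynomials'' $F_n$ and $g_n$ are genuine Laurent polynomials, so ``nonnegative coefficients'' must be read through (\ref{key})--(\ref{key2}) rather than as literal positivity.
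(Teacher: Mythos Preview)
Your proof is correct and follows essentially the same approach as the paper: the core tools---the expansion via $F_n$, the auxiliary function $g_n=F_n-F_n'$ (which is the paper's $G_n$), the monotonicity of $g_n(x)e^{-x}$ from (\ref{key2}), and the bootstrap from (i)--(iii) to (iv) using closure of complete monotonicity under products---are identical. The only cosmetic differences are that you obtain (ii) by a direct computation with the companion formula rather than as a limit of (i), and you handle (iv) via the binomial expansion of $(\tfrac{a+b}2)^j\pm(\tfrac{a-b}2)^j$ instead of the paper's induction on $j$.
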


\begin{proof} By dilation, we may assume $s=1$.
We prove (i) first.  Let $x=t^\alpha$ and $F_n$ be as in \ref{Fn},
\[
\frac{d^n}{dt^n} e^{-t^\alpha} = (-1)^n t^{-n} 
\sum_{k=1}^n a_k^{(n)} x^k e^{-x}=(-1)^nt^{-n+K\alpha}e^{-x}F_n(x)
\]
and
\begin{eqnarray}\label{cx}
\frac{d^n}{dt^n} e^{-ct^\alpha} = (-1)^n t^{-n} 
\sum_{k=1}^n c^k a_k^{(n)} x^k e^{-x}e^{-rx}=(-1)^nt^{-n+K\alpha}c^{K}e^{-cx}F_n(cx) .
\end{eqnarray}
Applying Lemma \ref{thm:akn-subexp} and Lemma \ref{thm:subexp-doublegrowth} to $F_n$ gives us    
$$\frac{\frac{d^n}{dt^n} e^{-ct^\alpha} }{\frac{d^n}{dt^n} e^{-t^\alpha}} \geq c^{K},$$
for any $K\geq K_1$.
This implies (i) since $ e^{-t^\alpha}$ is completely monotone for any $0<\alpha\leq1$.

We now prove (ii). Let $g(s,t)=e^{-st^\alpha} {s^{-{K_1}}}$. Then 
$-\partial_s g(s,t)$, is the limit of the family of functions $$ \frac 1{s^{{K_1}+1}(c-1)}(e^{-st^{\alpha}} - c^{-{K_1}} e^{-cst^{\alpha}})$$ as $c\rightarrow1$, which are  completely monotone in $t$ by (i). So $${K_1}e^{-st^{\alpha}} +st^\alpha e^{-st^{\alpha}} =-s^{{K_1}+1}\partial_s g(s,t)$$ is completely monotone in $t$.

For (iii), we denote by $f^{(n)}(t)=\partial_t^nf(t)$ and, for $K\geq K_2\geq K_1$, write
\begin{eqnarray}
(t^\alpha e^{-t^{\alpha}})^{(n)}+  {K} (e^{-t^{\alpha}})^{(n)}&=&-\frac1\alpha[t (e^{-t^{\alpha}})']^{(n)}+ {K} (e^{-t^{\alpha}})^{(n)}\nonumber\\
&=&-\frac1\alpha[t (e^{-t^{\alpha}})^{(n+1)}+n(e^{-t^{\alpha}})^{(n)}]+  {K} (e^{-t^{\alpha}})^{(n)}\nonumber\\
&=&\frac{(-1)^{n}t^{-n}}\alpha\left[\sum_{k= 1}^\infty(a_k^{(n+1)}-(n-K\alpha)a_k^{(n)})t^{k\alpha}e^{-t^\alpha}\right]\nonumber\\
&=& {(-1)^{n}t^{-n}} \left[\sum_{k=1}^\infty(a_{k-1}^{(n)}-(k-K)a_k^{(n)})t^{k\alpha}e^{-t^\alpha}\right]\nonumber\\
&=& {(-1)^{n}t^{-n+K\alpha}} \left[\sum_{k=-\infty}^\infty(a_{K+k-1}^{(n)}-ka_{K+k}^{(n)})t^{k\alpha}e^{-t^\alpha}\right]\nonumber\\
&=& {(-1)^{n}t^{-n+{K}\alpha}}  x e^{-x} G_n(x)\label{tout}
\end{eqnarray}
 with $x=t^\alpha$ and $G_n(x)$ defined as in   \ref{Gn}, which depends on $K$. Lemma  \ref{thm:subexp-doublegrowth} says that $G_n(x)e^{-x}$ deceases in $x$ if $K\geq K_2$ and note that $G_n(x)e^{-x} =-(F_n(x)e^{-x})'\geq0$. We have 
\begin{eqnarray*}
xG_n(x)e^{-x}&\leq&\frac1{(1-c)}\int_{cx}^{x}G_n(s)e^{-s}ds\\
 &=&\frac1{(1-c)}\int_{cx}^{x}-(F_n(s)e^{-s})'ds\\
 &\leq&\frac1{(1-c)} F_n(cx)e^{-cx},
\end{eqnarray*}
for $0<c<1$.  
Combing this inequality with (\ref{cx}) and (\ref{tout}) we get
$$\frac{(-1)^n\frac{d^n}{dt^n} (t^\alpha e^{- t^\alpha} +  {K_2} e^{-t^{\alpha}})}{(-1)^n\frac{d^n}{dt^n} e^{-ct^\alpha}} \leq  \frac1{c^{K_2}(1-c) }.$$
This proves (iii) since $e^{-ct^\alpha}$ and $e^{-t^\alpha}$ are completely monotone.

For (iv), let $f(t)=\max\{\frac {K_1}{c^{K_1}}, \frac1{c^{K_2}(1-c)}\} e^{-cst^{\alpha}}, g(t)=st^{\alpha}e^{-st^{\alpha}}$. By (i), (ii) and (iii) we have 
that both $f+g,f-g$ are completely monotone in $t$. Recall that complete monotonicity is preserved by multiplication. Note that  
\begin{eqnarray*}
f^{j+1}+g^{j+1}=\frac12[(f^j-g^j)(f-g)+(f^j+g^j)(f+g)] \\
f^{j+1}-g^{j+1}=\frac12[(f^j-g^j)(f+g)+(f^j+g^j)(f-g)].
\end{eqnarray*}
We get, by induction, that $(\max\{\frac {K_1}{c^{K_1}}, \frac1{c^{K_2}(1-c)\alpha}\})^j e^{-jcst^{\alpha}} -s^jt^{j\alpha} e^{-jst^{\alpha}} $ is completely monotone for any $s>0$, which implies (iv).
\end{proof}

We will apply Theorem \ref{main1} to  pointwise estimates of $\phi_{s,\alpha}(\la)$. Let us first list a few basic properties of $\phi_{s,\alpha}.$
\begin{lemma} For any $s>0, 0<\alpha,\beta<1$, we have
\begin{eqnarray}
\phi_{s,\frac12}(\la)&=&\frac 1{2\sqrt{\pi }}se^{-\frac{s^2}{4\la}}\la^{-\frac
32} . \label{1/2}\\
\phi_{1,\alpha\beta}(\la)&=&\int_0^\infty \phi_{s,\alpha}(\la)\phi_{1,\beta}(s)ds. \label{cvlt}\\
\phi_{s,\alpha}(\la)&=&s^{-\frac1\alpha}\phi_{1,\alpha}(s^{-\frac1\alpha}\la),\label{sla}\\
-\alpha s\partial_s\phi_{s,\alpha}( \la)&=&\phi_{s,\alpha}( \la)+\la\partial_\la \phi_{s,\alpha}( \la). \label{dsla}
\end{eqnarray}
\end{lemma}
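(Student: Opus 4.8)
The plan is to reduce all four identities to one fact: a continuous integrable function on $(0,\infty)$ is determined by its Laplace transform, so for each identity it suffices to check that the two sides have the same Laplace transform in $t$. Throughout I will use, from (\ref{est}) and (\ref{dest}), that $\int_0^\infty e^{-\la t}\phi_{s,\alpha}(\la)\,d\la=e^{-st^\alpha}$ and $\int_0^\infty e^{-\la t}\partial_s\phi_{s,\alpha}(\la)\,d\la=-t^\alpha e^{-st^\alpha}$, together with the positivity and smoothness of $\phi_{s,\alpha}$. The identity (\ref{sla}) comes essentially for free: substituting $t=s^{-1/\alpha}u$ in $e^{-st^\alpha}=\int_0^\infty e^{-\la t}\phi_{s,\alpha}(\la)\,d\la$ gives $e^{-u^\alpha}=\int_0^\infty e^{-s^{-1/\alpha}u\la}\phi_{s,\alpha}(\la)\,d\la$, and the change of variable $\nu=s^{-1/\alpha}\la$ rewrites the right-hand side as $\int_0^\infty e^{-u\nu}\,s^{1/\alpha}\phi_{s,\alpha}(s^{1/\alpha}\nu)\,d\nu$; comparing with $e^{-u^\alpha}=\int_0^\infty e^{-u\nu}\phi_{1,\alpha}(\nu)\,d\nu$ and invoking uniqueness yields (\ref{sla}).

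For (\ref{1/2}) I would start from the classical subordination identity $e^{-x}=\frac1{\sqrt\pi}\int_0^\infty v^{-1/2}e^{-v}e^{-x^2/(4v)}\,dv$ for $x\geq0$, itself a consequence of the Gaussian integral (equivalently, of the $1/2$-subordination of the Poisson kernel to the Gauss--Weierstrass kernel). Putting $x=s\sqrt t$ and substituting $\la=s^2/(4v)$ turns this into $e^{-s\sqrt t}=\int_0^\infty e^{-\la t}\,\tfrac{s}{2\sqrt\pi}\la^{-3/2}e^{-s^2/(4\la)}\,d\la$, which is precisely the Laplace representation of $\phi_{s,1/2}$, and uniqueness gives (\ref{1/2}). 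For (\ref{cvlt}) I would compute the Laplace transform of the right-hand side: since the integrand is nonnegative, Fubini gives $\int_0^\infty e^{-\la t}\!\int_0^\infty\phi_{s,\alpha}(\la)\phi_{1,\beta}(s)\,ds\,d\la=\int_0^\infty\phi_{1,\beta}(s)e^{-st^\alpha}\,ds=e^{-(t^\alpha)^\beta}=e^{-t^{\alpha\beta}}$, the last-but-one equality being the defining Laplace representation of $\phi_{1,\beta}$ evaluated at $t^\alpha$. Since $\int_0^\infty e^{-\la t}\phi_{1,\alpha\beta}(\la)\,d\la=e^{-t^{\alpha\beta}}$ as well, (\ref{cvlt}) follows.

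Finally, (\ref{dsla}) is a differential consequence of the scaling relation (\ref{sla}) just proved: writing $\psi=\phi_{1,\alpha}$ so that $\phi_{s,\alpha}(\la)=s^{-1/\alpha}\psi(s^{-1/\alpha}\la)$, a direct application of the chain rule to $\partial_s$ and to $\partial_\la$ gives $\partial_s\phi_{s,\alpha}(\la)=-\tfrac1{\alpha s}\bigl(\phi_{s,\alpha}(\la)+\la\,\partial_\la\phi_{s,\alpha}(\la)\bigr)$, which is exactly (\ref{dsla}). Alternatively one can check it on the Laplace side: by (\ref{dest}) one has $\int_0^\infty e^{-\la t}\bigl(-\alpha s\,\partial_s\phi_{s,\alpha}\bigr)\,d\la=\alpha s\,t^\alpha e^{-st^\alpha}$, while integration by parts in $\la$ gives $\int_0^\infty e^{-\la t}\,\la\,\partial_\la\phi_{s,\alpha}\,d\la=-e^{-st^\alpha}-t\,\partial_t e^{-st^\alpha}$, so that $\int_0^\infty e^{-\la t}\bigl(\phi_{s,\alpha}+\la\,\partial_\la\phi_{s,\alpha}\bigr)\,d\la=-t\,\partial_t e^{-st^\alpha}=\alpha s\,t^\alpha e^{-st^\alpha}$, and the two transforms coincide.

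All the computations are routine; the only points deserving a word of care are the legitimacy of differentiating under the integral sign and the integrability of $\partial_s\phi_{s,\alpha}$ (both already recorded via (\ref{dest}) and the reference \cite{Y80}), and the vanishing of the boundary term $\la\,\phi_{s,\alpha}(\la)\,e^{-\la t}$ at $\la=0^+$ and $\la=\infty$ in the integration by parts for (\ref{dsla})---which holds because $\phi_{s,\alpha}$ is a smooth integrable density whose Laplace transform is finite for every $t>0$ (hence with rapid decay at infinity) and which vanishes to infinite order at $0$. Note that (\ref{sla}) should be established before (\ref{dsla}), since the cleanest proof of the latter relies on it; beyond this ordering I do not anticipate a genuine obstacle.
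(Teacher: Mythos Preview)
Your proposal is correct and follows essentially the same approach as the paper: the paper's proof simply cites (\ref{1/2}) as well-known from \cite{Y80}, says that (\ref{cvlt}) and (\ref{sla}) ``can be easily seen from (\ref{est}) and (\ref{rest})'' (i.e., the Laplace representation and its uniqueness, exactly as you argue), and notes that (\ref{sla}) implies (\ref{dsla}). Your write-up merely fills in the routine computations the paper omits; the only superfluous part is the alternative Laplace-side verification of (\ref{dsla}), since the chain-rule derivation from (\ref{sla}) already suffices and is what the paper invokes.
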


\begin{proof} (\ref{1/2}) is well-known (see e.g. \cite{Y80}, page 268). (\ref{cvlt}), (\ref{sla}) can be easily seen from (\ref{est}) and (\ref{rest}).  (\ref{sla}) implies (\ref{dsla}).
\end{proof}

\begin{corollary}\label{fs}
For all  $\la,s>0, 0<c<1,j\in {\Bbb N}$, we have 
\begin{eqnarray}
\label{csf1}c^{K_1}\phi_{s,\alpha}(\la)&\leq&  \phi_{cs,\alpha}(\la)\\
0&\leq&\partial_\la(\la^{1+ \alpha K_1  }\phi_{s,\alpha}(\la)),\label{csfla}\\
|s^j\partial^j_s\phi_{s,\alpha}(\la)|&\leq& \left(\max\left\{\frac {jK_1}{c^{K_1}}, \frac j{c^{K_2}(1-c)\alpha}\right\}\right)^j \phi_{cs,\alpha},\\
 |s\partial_s\phi_{s,\alpha}(\la)|&\leq& \left(\frac {10}{1-\alpha}\right)\phi_{\alpha s,\alpha}(\la)\label{cdsf},\\
  |s^j\partial^j_s\phi_{s,\alpha}(\la)|&\leq& \left(\frac {10j}{1-\alpha}\right)^j\phi_{\alpha s,\alpha}(\la)\label{cdsf3}.
\end{eqnarray}
\end{corollary}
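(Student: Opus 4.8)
The plan is to obtain every inequality of Corollary~\ref{fs} by reading it off the matching complete monotonicity statement of Theorem~\ref{main1}, transported through the inverse Laplace transform by means of the representations (\ref{est}), (\ref{dest}) and the identity (\ref{dsla}).

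The engine is the following dictionary. By the Hausdorff--Bernstein--Widder theorem a completely monotone function on $(0,\infty)$ is the Laplace transform of a \emph{unique} positive Borel measure; since $\phi_{s,\alpha}$ and its $s$-derivatives are smooth and integrable, whenever a finite real linear combination of the functions $t^{j\alpha}e^{-bt^{\alpha}}$ (in particular of the $e^{-bt^{\alpha}}$) is completely monotone in $t$, applying $\mathcal{L}^{-1}$ termwise via (\ref{est}) and (\ref{dest}) and invoking injectivity of the Laplace transform shows that the corresponding combination of the densities $\phi_{b,\alpha}(\la)$, $\partial_s^{j}\phi_{b,\alpha}(\la)$ is $\geq 0$ for every $\la>0$ (nonnegativity a.e.\ promotes to everywhere by continuity). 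Granting this, (\ref{csf1}) is immediate: $\mathcal{L}^{-1}$ applied to the completely monotone function of Theorem~\ref{main1}(i) gives $\phi_{cs,\alpha}(\la)-c^{K_1}\phi_{s,\alpha}(\la)\geq 0$.

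For (\ref{csfla}) I would start from Theorem~\ref{main1}(ii). Since $t^{\alpha}e^{-st^{\alpha}}=-\partial_s e^{-st^{\alpha}}$, identity (\ref{dest}) gives $\mathcal{L}^{-1}(st^{\alpha}e^{-st^{\alpha}})=-s\partial_s\phi_{s,\alpha}$, so $\mathcal{L}^{-1}$ of the completely monotone function $K_1 e^{-st^{\alpha}}+st^{\alpha}e^{-st^{\alpha}}$ is $K_1\phi_{s,\alpha}(\la)-s\partial_s\phi_{s,\alpha}(\la)\geq 0$. Substituting (\ref{dsla}) in the form $-s\partial_s\phi_{s,\alpha}=\tfrac1\alpha(\phi_{s,\alpha}+\la\partial_\la\phi_{s,\alpha})$ and multiplying by $\alpha$ yields $(1+\alpha K_1)\phi_{s,\alpha}+\la\partial_\la\phi_{s,\alpha}\geq 0$; multiplying by $\la^{\alpha K_1}$ identifies the left side with $\partial_\la(\la^{1+\alpha K_1}\phi_{s,\alpha})$, which is (\ref{csfla}). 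The same mechanism yields the $j$-th order bound: since $t^{j\alpha}e^{-st^{\alpha}}=(-1)^{j}\partial_s^{j}e^{-st^{\alpha}}$ we get $\mathcal{L}^{-1}(s^{j}t^{j\alpha}e^{-st^{\alpha}})=(-1)^{j}s^{j}\partial_s^{j}\phi_{s,\alpha}$, so the two sign choices in Theorem~\ref{main1}(iv) translate into $\pm s^{j}\partial_s^{j}\phi_{s,\alpha}(\la)\leq(\max\{\tfrac{jK_1}{c^{K_1}},\tfrac{j}{c^{K_2}(1-c)\alpha}\})^{j}\phi_{cs,\alpha}(\la)$, i.e.\ the asserted bound on $|s^{j}\partial_s^{j}\phi_{s,\alpha}|$.

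Finally, (\ref{cdsf}) and (\ref{cdsf3}) are the special case $c=\alpha$ of the previous bound (this is what puts $\phi_{\alpha s,\alpha}$ on the right), after estimating the combinatorial constant $\max\{\tfrac{jK_1}{\alpha^{K_1}},\tfrac{j}{\alpha^{K_2}(1-\alpha)\alpha}\}$. I expect this constant bookkeeping to be the main, if only mildly technical, obstacle. When $\alpha$ is close to $1$ it is routine: $K_1=\lceil\tfrac{\alpha}{1-\alpha}\rceil\leq\tfrac1{1-\alpha}$ (and similarly for $K_2$), while $\alpha^{K_1}\geq\alpha^{1/(1-\alpha)}\geq\tfrac14$ and $\alpha^{K_2}$ is likewise bounded below, so the constant is $\lesssim\tfrac1{1-\alpha}$, giving (\ref{cdsf})--(\ref{cdsf3}) in that range. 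For $\alpha$ bounded away from $1$ the powers $\alpha^{-K_1},\alpha^{-K_2}$ degenerate and this route must be replaced by the precise subordination formula: one writes $e^{-st^{\alpha}}$ as a subordination of the $\alpha=\tfrac12$ kernel through (\ref{cvlt}), uses the explicit expression (\ref{1/2}) and differentiates under the integral, reducing the $\partial_s^{j}$-estimates to elementary Gaussian-type computations (the "easy, previously known" case). Splicing the two ranges of $\alpha$ together completes the proof.
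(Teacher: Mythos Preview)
Your proposal is correct and follows the paper's approach: the Theorem~\ref{main1} + Hausdorff--Bernstein--Widder dictionary via (\ref{est}) and (\ref{dest}), with (\ref{csfla}) obtained from the $s$--$\la$ derivative relation (the paper computes from the scaling identity (\ref{sla}) rather than quoting (\ref{dsla}), but since (\ref{dsla}) is derived from (\ref{sla}) the two arguments are equivalent, and yours is slightly slicker). Your small-$\alpha$ splicing for (\ref{cdsf})--(\ref{cdsf3}) is unnecessary overhead here: the paper disposes of those two lines in a single clause invoking $K_i\leq i/(1-\alpha)$, and the downstream applications (Corollary~\ref{Ts}, Lemma~\ref{keybmo}) only ever use $\alpha$ bounded away from~$0$, so the degeneration you worry about never enters.
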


\begin{proof} These are direct consequences of Theorem \ref{main1}, the identity (\ref{est}), and the Hausdorff-Bernstein-Widder Theorem because $K_i\leq \frac i{1-\alpha}$, except that (\ref{csfla}) requires a little more calculation. To prove (\ref{csfla}), note that (\ref{dest}) and Theorem \ref{main1} (ii) imply that 
$$\partial_s\frac {\phi_{s,\alpha}(\la)}{s^{K_1}}=-s^{-K_1-1}(K_1\phi_{s,\alpha}(\la)-s\partial_s\phi_{s,\alpha}(\la))\leq 0.$$
Since $\phi_{s,\alpha}(\la)=s^{-\frac1\alpha}\phi_{1,\alpha}(s^{-\frac1\alpha}\la)$, we get
$$
-\left(\frac1{\alpha}+K_1\right)s^{-\frac1{\alpha}-K_1-1}\phi_{1,\alpha}(s^{-\frac1\alpha}\la)-\frac1\alpha s^{-\frac1\alpha -1}\la s^{-\frac1\alpha-K_1 }(\partial_\la\phi_{1,\alpha})(s^{-\frac1\alpha}\la)\leq 0.
$$
That is 
$$
(1+K_1\alpha) \phi_{1,\alpha}(s^{-\frac1\alpha}\la)+  \la s^{-\frac1 \alpha }(\partial_\la\phi_{1,\alpha})(s^{-\frac1\alpha}\la)\geq0 .
$$
Therefore
$$
(1+K_1\alpha) \phi_{s,\alpha}( \la)+  \la  \partial_\la\phi_{s,\alpha}( \la)\geq0,
$$
since $\partial_\la \phi_{s,\alpha}(\la)=s^{-\frac2\alpha}\partial_\la \phi_{s,\alpha}(s^{-\frac1\alpha}\la)$.
This is (\ref{csfla}). 
\end{proof}

\begin{lemma}
For any $s>0, 0<\beta<\alpha<1$, we have that
\begin{eqnarray}
\int_0^\infty  \left|\ln  \left(s^{-\frac1\alpha}u\right)\right| \phi_{s,\alpha} (u)du<\frac c{\beta}. \label{lnuv}\\
\int_0^\infty\int_0^\infty \left|\ln  \left( \frac  uv\right)\right|\phi_{s,\alpha} (u)\phi_{s,\alpha}(v)dudv<\frac c{\beta^2}. \label{lnuv2}
\end{eqnarray}
\end{lemma}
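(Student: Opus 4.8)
The plan is to exploit the homogeneity of $\phi_{s,\alpha}$ to reduce to $s=1$, to bound the double integral by the single one, and then to control the single logarithmic moment $\int_0^\infty|\ln u|\,\phi_{1,\alpha}(u)\,du$ by two fractional moments of $\phi_{1,\alpha}$ of orders $\pm\alpha/2$. Concretely, by the scaling identity (\ref{sla}) the substitution $u=s^{1/\alpha}u'$ turns the left side of (\ref{lnuv}) into $\int_0^\infty|\ln u'|\,\phi_{1,\alpha}(u')\,du'$, and the substitutions $u=s^{1/\alpha}u'$, $v=s^{1/\alpha}v'$ turn the left side of (\ref{lnuv2}) into $\int_0^\infty\!\!\int_0^\infty|\ln(u'/v')|\,\phi_{1,\alpha}(u')\,\phi_{1,\alpha}(v')\,du'\,dv'$, using $\ln(u/v)=\ln(s^{-1/\alpha}u)-\ln(s^{-1/\alpha}v)$. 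Since $\int_0^\infty\phi_{1,\alpha}=1$ (let $t\to0$ in (\ref{est})) and $|\ln(u'/v')|\le|\ln u'|+|\ln v'|$, the second integral is at most $2\int_0^\infty|\ln u|\,\phi_{1,\alpha}(u)\,du$. So it is enough to prove $\int_0^\infty|\ln u|\,\phi_{1,\alpha}(u)\,du\le c/\alpha$; as $0<\beta<\alpha<1$ this yields both $<c/\beta$ in (\ref{lnuv}) and $<c/\beta^2$ in (\ref{lnuv2}).

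\emph{Fractional moments.} For $0<q<\alpha$ I will compute
\[
\int_0^\infty u^{-q}\phi_{1,\alpha}(u)\,du=\frac{\Gamma(1+q/\alpha)}{\Gamma(1+q)},\qquad
\int_0^\infty u^{q}\phi_{1,\alpha}(u)\,du=\frac{\Gamma(1-q/\alpha)}{\Gamma(1-q)}.
\]
The first comes from writing $u^{-q}=\Gamma(q)^{-1}\int_0^\infty z^{q-1}e^{-uz}\,dz$, interchanging integrals (Tonelli, the integrand being positive), applying (\ref{est}) to reduce to $\Gamma(q)^{-1}\int_0^\infty z^{q-1}e^{-z^\alpha}\,dz$, and substituting $w=z^\alpha$. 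The second comes the same way from the identity $u^{q}=\frac{q}{\Gamma(1-q)}\int_0^\infty(1-e^{-uz})\,z^{-1-q}\,dz$, together with $\int_0^\infty(1-e^{-uz})\phi_{1,\alpha}(u)\,du=1-e^{-z^\alpha}$. Taking $q=\alpha/2$ keeps $q/\alpha=\tfrac12$ fixed, so the Gamma-arguments $1\pm\tfrac12$ and $1\mp\tfrac\alpha2$ all stay in the compact interval $[\tfrac12,\tfrac32]$, on which $\Gamma$ is bounded away from $0$ and $\infty$; hence both moments are at most an absolute constant, uniformly in $\alpha\in(0,1)$.

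\emph{Conclusion.} Using the elementary bound $|\ln u|\le q^{-1}(u^{q}+u^{-q})$ for all $u>0$ (verified separately on $(0,1]$ and $[1,\infty)$) with $q=\alpha/2$,
\[
\int_0^\infty|\ln u|\,\phi_{1,\alpha}(u)\,du\le\frac{2}{\alpha}\left(\int_0^\infty u^{\alpha/2}\phi_{1,\alpha}(u)\,du+\int_0^\infty u^{-\alpha/2}\phi_{1,\alpha}(u)\,du\right)\le\frac{c}{\alpha},
\]
which finishes the argument.

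\emph{Expected difficulty.} The one delicate point is uniformity in $\alpha$ in the moment step: a \emph{fixed} exponent $q$ would make $\Gamma(1-q/\alpha)$ blow up as $\alpha\downarrow q$, so $q$ must scale with $\alpha$ (here $q=\alpha/2$), and this is exactly what produces the $1/\alpha$ loss (hence $1/\beta$, and $1/\beta^2$ after the crude triangle-inequality step for the double integral). Everything else—convergence of the auxiliary integrals and the Tonelli interchanges—is routine, resting on positivity of $\phi_{1,\alpha}$ together with its fast decay at $0$ and its $O(u^{-1-\alpha})$ decay at $\infty$.
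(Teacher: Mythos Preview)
Your argument is correct. The reduction to $s=1$ via (\ref{sla}) and the bound on the double integral by twice the single one are exactly as in the paper; the substantive difference is in how you estimate $u(\alpha):=\int_0^\infty|\ln u|\,\phi_{1,\alpha}(u)\,du$.

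The paper proceeds indirectly: it verifies $u(\tfrac12)<\infty$ from the explicit formula (\ref{1/2}), propagates finiteness to $u(2^{-n})$ via the convolution identity (\ref{cvlt}), compares $\phi_{1,\alpha}$ to $\phi_{1,2^{-n}}$ pointwise using (\ref{csf1}) to get $u(\alpha)<\infty$ for all $\alpha$, argues continuity in $\alpha$, and finally extracts the quantitative bound from the functional inequality $|u(\alpha)-\alpha^{-1}u(\beta)|\le u(\alpha\beta)\le u(\alpha)+\alpha^{-1}u(\beta)$. Your route is more direct and more elementary: you compute the fractional moments $\int u^{\pm q}\phi_{1,\alpha}(u)\,du=\Gamma(1\pm q/\alpha)/\Gamma(1\pm q)$ straight from the Laplace transform (\ref{est}), choose $q=\alpha/2$ so that all Gamma arguments stay in $[\tfrac12,\tfrac32]$, and conclude via $|\ln u|\le q^{-1}(u^q+u^{-q})$. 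This bypasses (\ref{csf1}), (\ref{cvlt}), and the continuity argument entirely, and in fact yields the sharper estimate $u(\alpha)\le c/\alpha$, hence $\le c/\alpha$ rather than $c/\beta^2$ in (\ref{lnuv2}). The paper's approach, on the other hand, illustrates how the structural identities for the subordination densities interact, which is thematically closer to the rest of Section~1.
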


\begin{proof} Since $\phi_{s,\alpha}(u)=s^{-\frac1\alpha}\phi_{1,\alpha}(s^{-\frac1\alpha}u)$, the left hand side of (\ref{lnuv}) is independent of $s$. We only need to prove the case $s=1$. For $\alpha=\frac12$, we can verify directly from (\ref{1/2}) that  (\ref{lnuv}) holds. Denote by $u(\alpha)$ the left hand side of (\ref{lnuv}). We then get $u(\frac1{2 })<\infty$. Using (\ref{cvlt}), we  get $u(\frac1{2^n})<\infty$. 
Now, for $\alpha>\frac1{2^n}$, we use (\ref{cvlt}) again and get
 \begin{eqnarray*}
\phi_{1,\frac1{2^n} }(\la)&=&\int_0^\infty \phi_{s,\alpha}(\la)\phi_{1,\frac1{\alpha2^n} }(s)ds\\
&\geq& \int_{0}^1 \phi_{s,\alpha}(\la)\phi_{1,\frac1{\alpha2^n} }(s)ds\\
(by \ (\ref{csf1}))&\geq&\phi_{1,\alpha}(\la) \int_{0}^1 s^{K_1(\alpha)}\phi_{1,\frac1{\alpha2^n} }(s)ds\\
&\geq&   c_\alpha \phi_{1,\alpha}(\la).
\end{eqnarray*}
We conclude that $u(\alpha)<\infty$ for all $0<\alpha<1$. Since $\phi_{1,\alpha}(\la)$ is continuous as a function in $\alpha$ and this continuity is uniform  for $\la\in [\delta,N]$ for any $0<\delta<N<\infty$, one can easily see that $u(\alpha)$ is continuous in $\alpha$ for $\alpha\in (0,1)$.
We conclude that $u(\alpha)$ is bounded on $[\frac1{2^n},\frac12]$ for any $n\in {\Bbb N}$.
Note  that (\ref{cvlt}) also implies that  
 \begin{eqnarray}
&&\int_0^\infty\phi_{1,\alpha\beta }(\la)|\ln \la|d\la\nonumber\\&=&\int_0^\infty \int_0^\infty \phi_{s,\alpha}(\la)|\ln\la|d\la\phi_{1,\beta }(s)ds\nonumber\\
&=&  \int_0^\infty \int_0^\infty \phi_{1,\alpha}(v)|\ln(s^{\frac1\alpha}v)|dv\phi_{1,\beta }(s)ds\nonumber\\
&\geq&\pm  \int_0^\infty \int_0^\infty \phi_{1,\alpha}(v)({\frac1\alpha}|\ln s |-|\ln v|) dv\phi_{1,\beta }(s)ds\label{lphabet}\\
\bigg(&\leq&  \int_0^\infty \int_0^\infty \phi_{1,\alpha}(v)({\frac1\alpha}|\ln s|+|\ln v|) dv\phi_{1,\beta }(s)ds\bigg)\label{phabe}
\end{eqnarray}
Our change in the order of integration is justified because all the terms are positive. Note $\int_0^\infty \phi_{t,\alpha}(s)ds=1$ for any $t,\alpha$. (\ref{lphabet}) and (\ref{phabe}) imply that
\begin{eqnarray}
 |u(\alpha)-\frac1\alpha u(\beta)|\leq u(\alpha\beta ) \leq u(\alpha)+\frac1\alpha u(\beta) \label{alphabeta}
\end{eqnarray}
 We then obtain (\ref{lnuv}). (\ref{lnuv2}) follows from (\ref{lnuv}).
\end{proof}

 \noindent {\bf Remark (Bell Polynomials).}
We define the complete Bell polynomial $B_n(x_1,\ldots,x_n)$ by its generating function
\[
\exp\left(\sum_{j=1}^{\infty}x_j\frac{u^j}{j!}\right)=\sum_{n=0}^{\infty}B_n(x_1,\ldots,x_n)\frac{u^n}{n!}
\]
From this, we get the formula
\[
B_n(x_1,\ldots,x_n)=\frac{d^n}{du^n}\exp\left(\sum_{j=1}^{\infty}x_j\frac{u^j}{j!}\right)\bigg|_{u=0}
\]
Now, for $s>0$, let
\begin{equation}\label{xjs}
x_j=-s\frac{d^j}{dt^j}t^{\alpha}=-s(\alpha)_jt^{\alpha-j},
\end{equation}
where $(\alpha)_j$ denotes the falling factorial.  Then
\[
\sum_{j=1}^{\infty}x_j\frac{u^j}{j!}=-st^{\alpha}\sum_{j=1}^{\infty}\frac{(\alpha)_j}{j!}\left(\frac{u}{t}\right)^j=st^{\alpha}-st^{\alpha}\left(1+\frac{u}{t}\right)^{\alpha}=st^{\alpha}-s(t+u)^{\alpha}
\]
Applying Theorem \ref{main1} part (i), we see that for all $n\in\bbN$, $c\in(0,1)$, and $t>0$ it holds that
\[
\frac{\frac{d^n}{du^n}e^{-sc(t+u)^{\alpha}}\bigg|_{u=0}}{\frac{d^n}{du^n}e^{-s(t+u)^{\alpha}}\bigg|_{u=0}}\geq c^{K_1},
\]
where ${K_1}$ is as in Lemma \ref{thm:akn-subexp}.  We can rewrite this inequality as
\[
e^{(1-c)st^{\alpha}}\frac{\frac{d^n}{du^n}e^{sct^{\alpha}-sc(t+u)^{\alpha}}\bigg|_{u=0}}{\frac{d^n}{du^n}e^{st^{\alpha}-s(t+u)^{\alpha}}\bigg|_{u=0}}\geq c^{K_1}.
\]
We conclude that if we define $x_j$ by \eqref{xjs}, then
\begin{equation}\label{bell}
e^{(1-c)st^{\alpha}}\frac{B_n(cx_1,\ldots,cx_n)}{B_n(x_1,\ldots,x_n)}\geq c^{K_1}
\end{equation}
for all $n\in\bbN$, $c\in(0,1)$, and $t>0$.  All of these calculations are easily reversible, and we conclude that \eqref{bell} is actually equivalent to part (i) of Theorem \ref{main1}.

\section{ Positive semigroups and BMO}

Let $(M ,\sigma , \mu )$ be a sigma-finite measure space. Let $L^1(M)$ be the space of all complex valued integrable  functions  and $L^\infty(M)$ be the space of all complex valued measurable and essentially bounded  functions on $M$. Denote by $f^*$  the pointwise complex conjugate of a function $f$ on $M$ and by $\langle f,g\rangle$ the duality bracket $\int fg^*$.

\begin{definition}\label{standard}
A map $T$ from $L^\infty(M)$ to $L^\infty(M)$ is called {\it positive } if $Tf\geq 0$ for $f\geq 0$.   If $T$ is positive on $L^\infty(M)$, then  $T\otimes id$ is positive  on matrix valued function spaces $ L^\infty(M)\otimes M_n$ for all $n\in {\Bbb N}$, i.e. $T$ is {\it completely positive}. 
\end{definition}

A positive map $T$ commutes with complex conjugation, i.e. $T(f^*)=T(f)^*$. For two positive maps $S,T$, we will write $S\geq T$ if $S-T$ is positive.

We will need the following Kadison-Schwarz inequality for  
completely positive maps $T$,
\begin{eqnarray}\label{cp}
|T(f)|^2\leq \|T(1)\|_{L^\infty}T(|f|^2),\qquad\quad f\in L^\infty(M).
\end{eqnarray}

\subsection{Postive semigroups}
We will consider a semigroup  $(T_t)_{t\geq0}$ of positive, weak*-continuous contractions on $L^\infty$ with the weak* continuity at $t=0+$. That is a family of positive, weak*-continuous contractions $T_t, t\geq0$ on $L^\infty$ such that $T_sT_t=T_{s+t}$, $T_0=id$ and  
 $\langle T_t(f), g \rangle \rightarrow \langle f,g\rangle$ as $t\rightarrow0+$ for any $f\in L^\infty,g\in L^1.$ 
 
 Such a semigroup $(T_y)$ always admits an
infinitesimal negative generator $L=\lim_{y\rightarrow 0}\frac {id-T_y}{y}$ which has a weak*-dense domain $D(L)\subset L^\infty$.  
We will write $T_y=e^{-yL}$. These definitions and facts extend to the noncommutative setting. Namely, given a semifinite von Neumann algebra $\M$ and a normal semifinite  faithful trace  $\tau$, we let $L^\infty(\M)=\M$ and $L^1(\M)$ be the completion of $\{f\in \M: \|f\|_{L^1}=\tau|f|<\infty\}$.  Here $ |g|=(g^*g)^\frac12$ and $g^*$ denotes the adjoint operators of $g$ and we set $\langle f,g\rangle=\tau (fg^*)$. We say a map $T$ on $\M$ is   completely positive if $(T\otimes id)(f)\geq 0$ for any $f\geq0, f\in \M\otimes M_n$. We say $f_\la$  weak*  converges to $f$  if $\lim_\la \langle f_\la ,g\rangle=\langle f,g\rangle$ for all $g\in L^1(\M)$ (see \cite{JMX06} for details).


The so-called subordinated semigroups $T_{y,\alpha} =e^{-y L^\alpha}, 0<\alpha<1$ are defined as 
\begin{eqnarray}
T_{t,\alpha}f=\int_0^\infty T_uf\phi_{t,\alpha}(u)du=\int_0^\infty T_{t^\frac1\alpha u}f\phi_{1,\alpha}(u)du,  \label{idpy}
\end{eqnarray}
with $\phi_{t,\alpha}$ given in Section 1. 
 The generator  $L^\alpha$ is given by
\begin{eqnarray}\label{B60}
L^\alpha(f)=\Gamma(-\alpha)^{-1}\int_0^\infty (T_t-id)(f)t^{-1-\alpha}dt,
\end{eqnarray}
for $f\in D(L)$.
There are other (equivalent) formulations for $L^\alpha$. The   formula (\ref{B60}) is due to Balakrishnan (see \cite{B60} and \cite[page 260]{Y80}). For  $T_t=e^{-tz}id$ with $Re(z)\geq0$, $L^\alpha=z^\alpha$ with a chosen   principal value  so that $Re({z}^\alpha)\geq0$. 

$(T_{y,\alpha})$ is again a semigroup of positive weak*-continuous contractions. The semigroup has an analytic extension and has the well-known norm estimate that 
\begin{eqnarray}\label{analy}
\sup_{y>0}\|y^k\partial^k_yT_{y,\alpha}\|<c_k. \end{eqnarray}
What we wish is a pointwise estimate.


Note that (\ref{idpy}) implies
\begin{eqnarray}
\frac{T_{y,\frac12}}y(f)\leq \frac{T_{t,\frac12}}t(f) \label{sbd} \qquad {\rm and}\qquad
|y^k\partial_y T_{y,\frac12}f|\leq c_{k,t} T_{t,\frac12}f,
\end{eqnarray}
for any $0\leq t\leq y$ and $f\geq0$
because of the positivity of $T_u$  and the precise formulation of $\phi_{y,\frac12}$. 

 Corollary \ref{fs} and the identity (\ref{idpy}) actually imply the following corollary.

\begin{corollary}\label{Ts}
For all  $f\geq0,s>0, 0<c,\alpha<1$, and $j\in{\Bbb N}$, we have 
\begin{eqnarray}
c^{K_1}T_{s,\alpha}f&\leq&  T_{cs,\alpha}f \label{csf}\\
 |s^j\partial^j_sT_{s,\alpha}(f)|&\leq& (\frac {10j}{1-\alpha})^jT_{\alpha s,\alpha}(f)\label{cdsf2}.
\end{eqnarray}
\end{corollary}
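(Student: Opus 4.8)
The plan is to deduce Corollary~\ref{Ts} directly from the pointwise estimates on $\phi_{s,\alpha}$ established in Corollary~\ref{fs}, by transporting those inequalities through the subordination identity~\eqref{idpy}. The key structural fact is that for $f\geq 0$ the operators $T_u$ are positive, so $T_u f\geq 0$ for every $u>0$; hence any pointwise inequality between the kernels $\phi_{s,\alpha}(\cdot)$ and $\phi_{cs,\alpha}(\cdot)$ (or a kernel obtained by differentiating in $s$) integrates against the nonnegative ``vector-valued'' family $u\mapsto T_u f$ to give the corresponding inequality between the subordinated operators applied to $f$. In the noncommutative setting the same reasoning applies verbatim, since $T_u f\geq 0$ as an element of $\M_+$ and integration against a signed scalar kernel preserves the order relation after splitting the kernel into positive and negative parts.

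First I would prove~\eqref{csf}. Starting from $T_{s,\alpha}f=\int_0^\infty T_u f\,\phi_{s,\alpha}(u)\,du$ and $T_{cs,\alpha}f=\int_0^\infty T_u f\,\phi_{cs,\alpha}(u)\,du$, I invoke the pointwise bound~\eqref{csf1}, namely $c^{K_1}\phi_{s,\alpha}(u)\leq \phi_{cs,\alpha}(u)$ for all $u>0$. Multiplying by $T_u f\geq 0$ and integrating in $u$ yields $c^{K_1}T_{s,\alpha}f\leq T_{cs,\alpha}f$ at once; I should note that the interchange of the scalar inequality with the integral is legitimate because the integrand on each side is nonnegative and the integrals converge (both $\phi_{s,\alpha}$ and $\phi_{cs,\alpha}$ are in $L^1$, and $\|T_u\|\leq 1$).

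For~\eqref{cdsf2} I would differentiate~\eqref{idpy} in $s$ under the integral sign, which is justified by the dominated-convergence estimate coming from the integrability of $\partial_s^j\phi_{s,\alpha}$ (this is exactly the fact recorded around~\eqref{dest}), to get $s^j\partial_s^j T_{s,\alpha}f=\int_0^\infty T_u f\,\bigl(s^j\partial_s^j\phi_{s,\alpha}(u)\bigr)\,du$. Now apply the kernel bound~\eqref{cdsf3}: $\bigl|s^j\partial_s^j\phi_{s,\alpha}(u)\bigr|\leq \bigl(\tfrac{10j}{1-\alpha}\bigr)^j\phi_{\alpha s,\alpha}(u)$ pointwise in $u$. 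Since $|s^j\partial_s^j T_{s,\alpha}f|\leq \int_0^\infty (T_u f)\bigl|s^j\partial_s^j\phi_{s,\alpha}(u)\bigr|\,du$ by positivity of $T_u f$ (and the triangle inequality for the Bochner/operator-valued integral, using that the integrand is a nonnegative element times a scalar), the desired bound $|s^j\partial_s^j T_{s,\alpha}f|\leq \bigl(\tfrac{10j}{1-\alpha}\bigr)^j T_{\alpha s,\alpha}f$ follows by integrating the pointwise estimate.

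The only point requiring a little care — and the place I would expect the main obstacle — is the rigorous justification of differentiation under the integral sign and of the ``triangle inequality'' for the operator-valued integral in the noncommutative case, where $|\int F|\leq \int |F|$ needs the integrand to be of the form (scalar)$\times$(fixed-sign positive element), which is exactly our situation with $F(u)=\bigl(s^j\partial_s^j\phi_{s,\alpha}(u)\bigr)\,T_u f$ and $T_u f\geq 0$. Once one records that $\partial_s^j\phi_{s,\alpha}\in L^1(0,\infty)$ uniformly on compact $s$-intervals (from~\eqref{dest} and its iterates, or from the scaling~\eqref{sla}), everything else is a direct integration of the scalar inequalities of Corollary~\ref{fs}, so the corollary is essentially immediate.
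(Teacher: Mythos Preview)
Your proposal is correct and follows exactly the approach indicated in the paper: the paper states only that Corollary~\ref{fs} and the subordination identity~\eqref{idpy} imply Corollary~\ref{Ts}, and you have spelled out precisely this implication, using the pointwise kernel bounds~\eqref{csf1} and~\eqref{cdsf3} integrated against the nonnegative family $u\mapsto T_u f$. The care you take with differentiation under the integral and the order-preserving nature of the integration is appropriate and fills in details the paper leaves implicit.
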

\medskip
\noindent{\bf Remark.} When $\alpha=1$, a similar estimate to Corollary \ref{Ts} may hold for some special semigroups. For example, the heat semigroups generated by the Laplacian operator on ${\Bbb R}^n$ has a similar estimate   with $ c>1$. But one can not hope this in general since (\ref{cdsf2}) is   stronger   then the analyticity on $L^\infty$.

\subsection{$\Gamma^2$ criterion} P. A Meyer's gradient form ${\Gamma}$ (also called ``Carr\'e du Champ") associated with $T_t$ is defined as,
 \begin{eqnarray}\label{Gamma}
  2{\Gamma}_L(f,g) =- L(f^{*}g)+(L(f^{*})g)+f^{*}(L(g)),
  \end{eqnarray}
for $f,g$ with $f^*,g, f^*g\in D(L)$. It is easy to verify that for $L=-\triangle=-\frac{\partial ^2}{\partial^2 x}$, $\Gamma_L(f,g)= \nabla f^* \cdot \nabla g $. 

\medskip
\noindent{\bf Convention.}  
We will write $\Gamma (f)$ for $\Gamma_L (f,f)$.
\medskip

It is well known that the completely positivity of  the operators $T_t$ implies that $\Gamma(f,g)$ is a completely positive bilinear form. We then have the Cauchy-Schwartz inequality
\begin{eqnarray}
 \Gamma \left(\int_0^\infty a_sd\mu(s),\int_0^\infty a_sd\mu(s)\right) 
\leq  
\int_0^\infty d|\mu|(s)\int_0^\infty \Gamma (a_s,a_s)d|\mu |(s)  \label{cauchy}
\end{eqnarray}

Bakry-\'Emery's $\Gamma^2$ criterion plays an important role in this article. We use an equivalent definition.
\begin{definition} A   semigroup of positive operator $(T_t)_t$ satisfies the $\Gamma^2\geq0$ criterion if $\Phi(s)=T_{s-u}|T_uf|^2, s>u$ is (midpoint) convex in $u$, i.e.
 \begin{eqnarray}\label{Gamma2}
  T_t|T_uf|^2-|T_tT_uf|^2\leq T_u(T_{t}|f|^2-|T_{t}f|^2)
  \end{eqnarray}
for all $t,u>0$ and $f\in L^\infty$. 
\end{definition}

For $L$ equal to the Laplace-Beltrami operator on a complete manifold, the $\Gamma^2\geq0$ criterion holds if the manifold has   nonnegative   Ricci curvature everywhere. 
The ``$\Gamma^2$" criterion  is satisfied by a large class of semigroups including the heat, Ornstein-Uhlenbeck, Laguerre, and Jacobi semigroups (see \cite{Ba96}), and also by the  semigroups of completely positive contractions   on group von Neumann algebras.
We refer the reader to \cite{BBG12} and references therein for the so-called curvature-dimension criterion  which is more general than  the ``$\Gamma^2 $" criterion.

D. Bakry usually assumes
that there exists a $^*$-algebra ${\mathcal A}$ which is weak$^*$ dense in
$L^\infty(M)$ such that $T_s({\mathcal A})\subset {\mathcal A}\subset D(L)$. 
This is not needed in this article because we will only use the form $T_{t,\alpha}{\Gamma}_{L^\beta}(T_{s,\alpha}f,T_{s,\alpha}g), 0<\alpha<1,\alpha\leq \beta\leq1$ which is well defined as
\begin{eqnarray}\label{Gamma1}
-L^\beta T_{t,\alpha}[(T_{s,\alpha}f^{*})(T_{s,\alpha}g)]+T_{t,\alpha}[(T_{s,\alpha}f^{*})(L^\beta T_{s,\alpha}g)]+T_{t,\alpha}[(L^\beta T_{s,\alpha}f^{*})(T_{s,\alpha}g)]
\end{eqnarray}
  for all $f,g\in L^\infty $ since $T_{s,\alpha}(L^\infty)\subset D (L)\subset D (L^\alpha)$ because of (\ref{idpy}). 

We will need the following Lemma due to P.A. Meyer. We add a short proof for the convenience of the reader.
\begin{lemma}\label{Meyer}
 For any $f\in L^\infty $ such that $T_sf,T_sf^*,T_s|f|^2\in D(L)$ for all $s>0$, we have
 \[ T_s|f|^2-|T_sf|^2 = 2 \int_{0}^s  T_{s-t}\Gamma(T_tf) dt  .\]
 In particular, for $0<\alpha<1$,
 \begin{eqnarray}\label{Meyeralpha}
  T_{s,\alpha}|f|^2-|T_{s,\alpha}f|^2 = 2 \int_{0}^s  T_{s-t,\alpha}\Gamma_{L^\alpha}(T_{t,\alpha}f) dt  
  \end{eqnarray}
 for any $f\in L^\infty$.
\end{lemma}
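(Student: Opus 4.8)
\textbf{Proof plan for Lemma \ref{Meyer}.}

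The plan is to prove the first identity by differentiating the obvious candidate antiderivative, and then to obtain the subordinated version as a formal consequence by applying the first identity to the semigroup $(T_{t,\alpha})_t$ with generator $L^\alpha$.

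First I would fix $f$ with $T_sf, T_sf^*, T_s|f|^2\in D(L)$ for all $s>0$, fix $s>0$, and consider the function $t\mapsto \Phi(t) := T_{s-t}|T_tf|^2$ on $[0,s]$. The goal is to show $\Phi$ is differentiable on $(0,s)$ with
\[
\frac{d}{dt}\Phi(t) = 2\, T_{s-t}\Gamma(T_tf),
\]
after which integrating from $0$ to $s$ gives $\Phi(s)-\Phi(0) = T_s|f|^2 - |T_sf|^2 = 2\int_0^s T_{s-t}\Gamma(T_tf)\,dt$, which is exactly the claim. To compute the derivative, write the difference quotient and split it in the standard way: for small $h$,
\[
\frac{\Phi(t+h)-\Phi(t)}{h} = \frac{T_{s-t-h}-T_{s-t}}{h}\big(|T_{t+h}f|^2\big) + T_{s-t}\left(\frac{|T_{t+h}f|^2 - |T_tf|^2}{h}\right).
\]
The first term tends to $-L T_{s-t}|T_tf|^2 = -T_{s-t}L|T_tf|^2$ (using that $T_tf$ lies in the relevant domain so that $|T_tf|^2\in D(L)$, by hypothesis). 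For the second term, expand $|T_{t+h}f|^2 - |T_tf|^2 = (T_{t+h}f^*)(T_{t+h}f) - (T_tf^*)(T_tf)$ and add and subtract $(T_tf^*)(T_{t+h}f)$ to get, after dividing by $h$ and letting $h\to0$, the expression $-(LT_tf^*)(T_tf) - (T_tf^*)(LT_tf)$. Combining, $\frac{d}{dt}\Phi(t) = T_{s-t}\big(-L|T_tf|^2 + (LT_tf^*)(T_tf) + (T_tf^*)(LT_tf)\big) = 2T_{s-t}\Gamma(T_tf)$ by the definition (\ref{Gamma}) of $\Gamma$. One also checks continuity of $\Phi$ at the endpoints $t=0,s$ using weak*-continuity (or strong continuity on the relevant subspace), so the fundamental theorem of calculus applies.

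For the subordinated statement (\ref{Meyeralpha}), I would simply invoke the first part applied to the semigroup $(T_{t,\alpha})_{t\ge0}$, whose negative generator is $L^\alpha$ and whose gradient form is $\Gamma_{L^\alpha}$. The point is that the hypothesis ``$T_{s,\alpha}g\in D(L^\alpha)$ for all $s>0$'' is automatic for every $g\in L^\infty$: by (\ref{idpy}), $T_{s,\alpha}(L^\infty)\subset D(L)\subset D(L^\alpha)$, and moreover $T_{s,\alpha}f^*, T_{s,\alpha}|f|^2$ lie in $L^\infty$, hence their further images under $T_{t,\alpha}$ lie in $D(L^\alpha)$ — so all the domain conditions needed to run the first part for the $\alpha$-semigroup are met. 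Thus the first identity gives $T_{s,\alpha}|f|^2 - |T_{s,\alpha}f|^2 = 2\int_0^s T_{s-t,\alpha}\Gamma_{L^\alpha}(T_{t,\alpha}f)\,dt$, which is (\ref{Meyeralpha}).

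The main obstacle I anticipate is the rigorous justification of passing the limit $h\to0$ inside the products and inside the (possibly unbounded) operator $T_{s-t}$ — i.e. that the difference quotients of $t\mapsto T_tf$ converge in a topology strong enough that the product $(T_tf^*)(T_{t+h}f)$ and the application of $T_{s-t}$ behave continuously, and that $|T_tf|^2$ genuinely lies in $D(L)$ with $L$ acting as expected. In the commutative $L^\infty$ setting this is handled by working with the weak* topology and using the assumed domain conditions together with the uniform boundedness $\|T_t\|\le1$; in the noncommutative setting one argues identically with $\tau(\,\cdot\,g)$ for $g\in L^1(\M)$, again using $\|T_t\|\le1$ and the explicit hypotheses on the domains. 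Since the paper only ever needs the identity for elements of the form $T_{s,\alpha}f$, which are very regular by (\ref{idpy}), these technicalities cause no real trouble.
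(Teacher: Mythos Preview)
Your approach is exactly the paper's: differentiate $\Phi(t)=T_{s-t}|T_tf|^2$, identify the derivative as $-2\,T_{s-t}\Gamma(T_tf)$, and integrate; then apply the same to $(T_{t,\alpha})$ using $T_{s,\alpha}(L^\infty)\subset D(L^\alpha)$. Two sign slips cancel in your write-up: in fact $\frac{T_{s-t-h}-T_{s-t}}{h}\to +LT_{s-t}$ (since $T_t=e^{-tL}$), so $\Phi'(t)=-2\,T_{s-t}\Gamma(T_tf)$, and $\Phi(s)-\Phi(0)=|T_sf|^2-T_s|f|^2$, not the reverse; fixing both gives the same final identity.
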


\begin{proof} For $s$ fixed, let
\[
F_t=T_{s-t}(|T_tf|^2).
\]
Then
\begin{eqnarray}\label{partialF}
\frac{\partial F_t}{\partial t} &=&\frac{\partial T_{s-t}}{%
\partial t}(|T_tf|^2)+T_{s-t}[(\frac{\partial T_t}{\partial t}%
f^{*})f]+T_{s-t}[f^*(\frac{\partial T_t}{\partial t}f)] \nonumber\\
&=&-2T_{s-t}\Gamma (T_tf).
\end{eqnarray}
Therefore
\begin{eqnarray*}
T_s|f|^2-|T_sf|^2 =-F_s+F_0=2\int_0^sT_{s-t}\Gamma (T_tf)dt.
\end{eqnarray*}
Since $T_{s,\alpha}(L^\infty)\subset D(L^\alpha)$ we get (\ref{Meyeralpha}) for all $f\in L^\infty$.
\end{proof}

\noindent{\bf Remark.} Equation (\ref{partialF}) shows that the $\Gamma^2\geq0$ criterion  implies that 
 \begin{eqnarray}\label{Gamma_2}
  T_s\Gamma (T_{v+t} f) \leq T_{v+s} (\Gamma  (T_tf))
  \end{eqnarray}
for all $v,s,t>0$ and $ f\in L^\infty$ such that $T_sf,T_sf^*,T_s|f|^2\in D(L)$ for all $s>0$.

\medskip
The following lemma says that the $\Gamma^2\geq0$ criterion passes to fractional powers, which could be known to some experts. We add a proof as we do not find a  reference.
\begin{lemma}
\label{sub} 
If $T_t=e^{-tL}$ satisfies the $\Gamma^2\geq0$ criterion (\ref{Gamma2}), then $T_{t,\alpha}=e^{-tL^\alpha}$ satisfies   (\ref{Gamma2}) and (\ref{Gamma_2}) for all $f\in L^\infty$ and $0<\alpha<1$. Moreover, 
\begin{eqnarray}
 \Gamma_{L^\alpha} (s^j\partial_s^jT_{s,\alpha}f) 
\leq  \left(\frac{10}{1-\alpha}\right)^jT_{s,\alpha}\Gamma_{L^\alpha} ( f)
   \label{cdsf22}
\end{eqnarray}
 \end{lemma}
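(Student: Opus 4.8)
The plan is to establish the three assertions of Lemma~\ref{sub} in sequence, each time transferring a known property of $T_t=e^{-tL}$ to the subordinated semigroup $T_{t,\alpha}=e^{-tL^\alpha}$ by averaging against the kernel $\phi_{t,\alpha}$ and using the Cauchy--Schwarz inequality (\ref{cauchy}) for the completely positive form $\Gamma$. First I would prove that $T_{t,\alpha}$ satisfies (\ref{Gamma2}). Using (\ref{idpy}), write $T_{s,\alpha}f=\int_0^\infty T_u f\,\phi_{s,\alpha}(u)\,du$, so that
\[
T_{t,\alpha}|T_{u,\alpha}f|^2 - |T_{t,\alpha}T_{u,\alpha}f|^2
\]
should be compared with $T_{u,\alpha}\bigl(T_{t,\alpha}|f|^2-|T_{t,\alpha}f|^2\bigr)$. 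Expanding $|T_{u,\alpha}f|^2$ as a double integral $\int\!\!\int T_v f^*\, T_w f\,\phi_{u,\alpha}(v)\phi_{u,\alpha}(w)\,dv\,dw$ and applying (\ref{cauchy}) reduces the subtracted square term to a form controlled by a single integral; the key point is that $\Phi(s)=T_{s-u}|T_uf|^2$ being midpoint convex in $u$ for the base semigroup, combined with the fact that $\phi_{u,\alpha}$ is a probability density, yields the convexity of the analogous quantity for $T_{t,\alpha}$ by Jensen's inequality applied in the averaging variable. In fact the cleanest route is to note that $L^\alpha$ is again the negative generator of a positive contraction semigroup and to re-run Meyer's computation (\ref{partialF}) with $L$ replaced by $L^\alpha$, giving $\partial_t T_{s-t,\alpha}(|T_{t,\alpha}f|^2)=-2T_{s-t,\alpha}\Gamma_{L^\alpha}(T_{t,\alpha}f)$; then midpoint convexity of $\Phi_\alpha(s)=T_{s-u,\alpha}|T_{u,\alpha}f|^2$ is equivalent to $\Gamma^2_{L^\alpha}\geq 0$, and this in turn follows from $\Gamma^2_L\geq0$ via the subordination formula, because $\Gamma_{L^\alpha}$ is an average of the base data in a way that preserves the relevant positivity.

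For the second assertion, (\ref{Gamma_2}) for $T_{t,\alpha}$, I would simply observe that (\ref{Gamma_2}) is a pointwise consequence of (\ref{Gamma2}) exactly as recorded in the Remark after Lemma~\ref{Meyer}: once $T_{t,\alpha}$ has the $\Gamma^2\geq0$ property and $T_{s,\alpha}(L^\infty)\subset D(L^\alpha)$ (which holds by (\ref{idpy})), the same differentiation argument applied to $F_t=T_{s-t,\alpha}(|T_{v+t,\alpha}f|^2)$ gives $T_{s,\alpha}\Gamma_{L^\alpha}(T_{v+t,\alpha}f)\leq T_{v+s,\alpha}\Gamma_{L^\alpha}(T_{t,\alpha}f)$. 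So this step is essentially free given the first.

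The third assertion (\ref{cdsf22}) is where the work of Section~1 enters. The plan is: by (\ref{cauchy}) applied to the representation $s^j\partial_s^j T_{s,\alpha}f=\int_0^\infty T_u f\,\bigl(s^j\partial_s^j\phi_{s,\alpha}(u)\bigr)\,du$, we get
\[
\Gamma_{L^\alpha}\bigl(s^j\partial_s^jT_{s,\alpha}f\bigr)
\leq \Bigl(\int_0^\infty |s^j\partial_s^j\phi_{s,\alpha}(u)|\,du\Bigr)
\int_0^\infty \Gamma_{L^\alpha}(T_u f,T_u f)\,\frac{|s^j\partial_s^j\phi_{s,\alpha}(u)|\,du}{\int_0^\infty|s^j\partial_s^j\phi_{s,\alpha}(v)|\,dv}.
\]
Now I invoke the pointwise bound (\ref{cdsf3}), namely $|s^j\partial_s^j\phi_{s,\alpha}(u)|\leq (\tfrac{10j}{1-\alpha})^j\phi_{\alpha s,\alpha}(u)$, which bounds both the total mass factor (using $\int\phi_{\alpha s,\alpha}=1$, so actually the mass of $|s^j\partial_s^j\phi_{s,\alpha}|$ is $\leq(\tfrac{10j}{1-\alpha})^j$ but we need a factor $(\tfrac{10}{1-\alpha})^j$ only, so a more careful accounting via (\ref{cdsf2}) applied directly to the positive quantity $\Gamma_{L^\alpha}(T_{\cdot}f)$ is cleaner) and the density in the averaged integral. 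The slickest version: apply the scalar inequality (\ref{cdsf2}) of Corollary~\ref{Ts}, i.e. $|s^j\partial_s^j T_{s,\alpha}g|\leq(\tfrac{10j}{1-\alpha})^j T_{\alpha s,\alpha}g$ valid for all $g\geq 0$, to the nonnegative function $g=\Gamma_{L^\alpha}(T_\cdot f)$ after first using $\Gamma_{L^\alpha}(s^j\partial_s^jT_{s,\alpha}f)\leq s^j\partial_s^j\!\bigl(T_{s,\alpha}\Gamma_{L^\alpha}(\cdot)\bigr)$-type monotonicity coming from (\ref{cauchy}) and (\ref{Gamma_2}). I expect the main obstacle to be precisely this bookkeeping: matching the constant $(\tfrac{10}{1-\alpha})^j$ in (\ref{cdsf22}) against the $(\tfrac{10j}{1-\alpha})^j$ that naturally appears, and carefully justifying the interchange of $\Gamma_{L^\alpha}$ with the integral defining $s^j\partial_s^j T_{s,\alpha}$ (differentiation under the integral sign for $\phi_{s,\alpha}$, legitimate by (\ref{dest}) and its higher analogues) together with the application of the Cauchy--Schwarz inequality (\ref{cauchy}) to a signed measure via its total variation. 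Everything else is a direct assembly of (\ref{cdsf3}), Lemma~\ref{Meyer}, and the Remark after it.
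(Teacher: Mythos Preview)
Your plan for parts two and three is essentially the paper's: once (\ref{Gamma_2}) is known for $T_{t,\alpha}$, the inequality (\ref{cdsf22}) follows exactly as you describe, by applying (\ref{cauchy}) with $d\mu(u)=s^j\partial_s^j\phi_{s,\alpha}(u)\,du$, bounding $|d\mu|$ pointwise by (\ref{cdsf3}), and using $\Gamma_{L^\alpha}(T_uf)\leq T_u\Gamma_{L^\alpha}(f)$ inside the integral. The constant discrepancy you flag between $(\tfrac{10}{1-\alpha})^j$ and $(\tfrac{10j}{1-\alpha})^j$ is real and is already present in the paper's own derivation; it is not a flaw in your reasoning.

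The genuine gap is in part one. You assert that ``$\Gamma_{L^\alpha}$ is an average of the base data in a way that preserves the relevant positivity,'' and earlier suggest that Jensen's inequality in the averaging variable should give midpoint convexity of $u\mapsto T_{s-u,\alpha}|T_{u,\alpha}f|^2$ from the corresponding convexity for $T_t$. Neither of these is a proof: the quantity $T_{t,\alpha}|T_{u,\alpha}f|^2$ is a triple integral of products $T_a(T_bf^*\,T_cf)$, and there is no evident way to dominate the nonlinear difference $T_{t,\alpha}|T_{u,\alpha}f|^2-|T_{t+u,\alpha}f|^2$ by an average of the corresponding scalar expressions via Jensen alone. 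The paper's route is concrete and you are missing it: use Balakrishnan's formula (\ref{B60}) to write
\[
\Gamma_{L^\alpha}(f,f)=c_\alpha\int_0^\infty\bigl(T_t|f|^2-|T_tf|^2+|T_tf-f|^2\bigr)\,t^{-1-\alpha}\,dt,
\]
and then observe that replacing $f$ by $T_uf$, the first piece is dominated by $T_u(T_t|f|^2-|T_tf|^2)$ via (\ref{Gamma2}) and the second piece by $T_u|T_tf-f|^2$ via Kadison--Schwarz (\ref{cp}); pulling $T_u$ out gives $\Gamma_{L^\alpha}(T_uf)\leq T_u\Gamma_{L^\alpha}(f)$ for the \emph{base} semigroup, and a further average against $\phi_{t,\alpha}$ with (\ref{cauchy}) yields $\Gamma_{L^\alpha}(T_{t,\alpha}f)\leq T_{t,\alpha}\Gamma_{L^\alpha}(f)$. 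This is the step that actually transfers $\Gamma^2\geq0$ across subordination. You should also be aware that making sense of the Balakrishnan integral requires the decay estimate $\|T_t|f|^2-|T_tf|^2\|\lesssim\min\{t,1\}$, which the paper proves by an iteration that itself uses (\ref{Gamma2}); this is a nontrivial technical point your outline omits.
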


 \begin{proof}
 Applying (\ref{B60}), we have that, with $c_\alpha=-(\Gamma(-\alpha))^{-1}>0$, 
 \begin{eqnarray}\label{gammaalpha1}
 \Gamma_{L^\alpha}(f,f)&=&c_\alpha\int_0^\infty (T_t|f|^2-(T_tf^*)f-f^*(T_tf)+|f|^2)t^{-1-\alpha}dt\nonumber\\
 &=&c_\alpha\int_0^\infty (T_t|f|^2-|T_tf|^2+|T_tf-f|^2)t^{-1-\alpha}dt.
 \end{eqnarray} 
 if $f,f^*,|f|^2 \in D(L)$. The integration converges because \begin{eqnarray}\label{1+alpha}
 \|T_t|f|^2-|T_tf|^2\|\leq c\min\{t,1\},
 \end{eqnarray} for $f\in D(L)$. In fact, by the $\Gamma^2\geq0$ criterion (\ref{Gamma2}), we see that
 $$T_t|T_tf|^2-|T_{2t}f|^2\leq \frac12(T_{2t}|f|^2-|T_{2t}f|^2).$$ So
 \begin{eqnarray*}
 \|T_t|f|^2-|T_tf|^2\|^\frac12&\leq& \|T_t|f-T_tf|^2-|T_t(f-T_tf)|^2\|^\frac12+ \|T_t|T_tf|^2-|T_{2t}f|^2\|^\frac12\\
 &\leq&ct+ 2^{-\frac12}\|T_{2t}|f|^2-|T_{2t}f|^2\|^\frac12. 
 \end{eqnarray*}
  Let $u(t)={t^{-\frac{1 }2}\|T_t|f|^2-|T_tf|^2\|^\frac12} $. We get
 $$u(t)\leq ct^{\frac12}+ u(2t).$$
 Since $u(t)$ is uniformly bounded on $[1,\infty)$, we get $u(t)$ is uniformly bounded on $[0,\infty)$ by iteration. This proves (\ref{1+alpha}).
 
 Applying the Cauchy-Schwartz inequality (\ref{cauchy}) and the $\Gamma^2\geq0$ criterion for $T_t$ to (\ref{gammaalpha1}), we get
 \begin{eqnarray}
 \Gamma_{L^\alpha}(T_uf,T_uf)\leq T_u\Gamma_{L^\alpha}(f,f).
 \end{eqnarray}
 Applying the subordination formula that $T_{t,\alpha}=\int_0^\infty T_u\phi_{t,\alpha}(u)du$ and the Cauchy-Schwartz inequality (\ref{cauchy}), we obtain
  \begin{eqnarray}
 \Gamma_{L^\alpha}(T_{t,\alpha} f,T_{t,\alpha}f)\leq T_{t,\alpha}\Gamma_{L^\alpha}(f,f).
 \end{eqnarray}
 One can easily adapt the proof to get
 \begin{eqnarray}\label{gamma2alpha}
T_{u,\alpha} \Gamma_{L^\alpha}(T_{t,\alpha} T_{v,\alpha}g,T_{t,\alpha}T_{v,\alpha}g)\leq T_{u,\alpha}T_{t,\alpha}\Gamma_{L^\alpha}(T_{v,\alpha}g,T_{v,\alpha}g).
 \end{eqnarray}
 for all $g\in L^\infty$ since $T_{v,\alpha}g, T_{u, \alpha}|T_{v,\alpha}g|^2\in D(L)$.
 Applying (\ref{Meyeralpha}), we get (\ref{Gamma_2}) for $T_{t,\alpha}$.
 
 Now, apply (\ref{cauchy}) to $\Gamma_{L^\alpha}$ and $a(s)=T_sf,d\mu(s)=s^j\partial_j\phi_{t,\alpha}(s)ds$; we get (\ref{cdsf22}) from (\ref{idpy}), (\ref{cdsf3}), and (\ref{gamma2alpha}).
 \end{proof}



\subsection{BMO spaces associated with semigroups of operators}

BMO spaces associated with semigroup generators have been intensively studied recently (see \cite{DY05}).
 In this article, we follow the ones studied in \cite{JM12} and \cite{M08} because they are defined in a pure semigroup language. Set
\begin{eqnarray}
\|f\|_{\mathrm{bmo}(L^\alpha) }&=&\sup_{0<t<\infty}\| T_{t,\alpha}|f|^2-|T_{t,\alpha}f|^2\|_{L^\infty}^\frac12,\\
\|f\|_{\mathrm{BMO}(L^\alpha) }&=&\sup_{0<t<\infty}\| T_{t,\alpha}|f-T_{t,\alpha}f|^2\|_{L^\infty}^\frac12, \label{BMOT}
\end{eqnarray}
for $f\in L^\infty , 0<\alpha\leq1$.



We wish to define the space  BMO$( L^\alpha), 0<\alpha\leq1$  so that it is a dual space and $L_0^\infty$ is weak* dense in it, to be consistent with the classical ones (where $L^\infty_0(M)=L^\infty(M)/kerL^\alpha$). In  \cite{JM12} and \cite{M08}, this is done by using a SOT- topology in the corresponding Hilbert C* modulars.   In this article, we prefer to    use the following detour to  avoid introducing the theory of Hilbert C* modulars.   
    Define, for $g\in L^1$,
\begin{eqnarray}\label{H1}
  \|g\|_{H^1(L^\alpha)}=\sup\{|\langle f,g\rangle|: f\in L^\infty, \|f\|_{BMO(L^\alpha)},\|f^*\|_{BMO(L^\alpha)}\leq 1\} .  
 \end{eqnarray}
 Let $H^1(L^\alpha)=\{g\in L^1;  \|g\|_{H^1}<\infty\}$. For a net $f_\la\in L_0^\infty(M)$, we say $f_\la$ converges in the {\it weak* topology} if $\langle f_\la,g\rangle $ converges for any $g\in H^1(L^\alpha)$. Let BMO$( L^\alpha)$ be the abstract closure of $L_0^\infty(M)$ with respect to this weak* topology, that is the linear space of all weak* convergent nets $f_\la \in L_0^\infty(M)$. For a weak* convergent  $f_\la$, let
 $$\|\lim_\la f_\la\|_{BMO(L^\alpha)}=\sup_{\|g\|_{H^1}\leq1}\lim_\la \langle f_\la,g\rangle.$$
 It is easy to see that this  coincides with (\ref{BMOT}) if $\lim_\la f_\la\in L^\infty$.

As an application of Corollary \ref{Ts}, we show that these BMO and bmo norms with different $0<\alpha<1$ are all equivalent if we assume the $\Gamma^2\geq0$ criterion.

\begin{lemma} \label{equibmo} Suppose $L$ generates a  weak* continuous semigroup of positive contractions, we have
\begin{eqnarray}
\|f\|_{BMO (L^\beta)}& \leq& \frac {c\alpha}{\beta}\|f\|_{BMO (L^\alpha)}, \label{alpha<beta}\\
\|f\|_{BMO (L^\beta)} &\leq&\frac{4}{1-\beta}\|f\|_{bmo (L^\beta)},\label{BMO<bmo}
\end{eqnarray}
for any $0<\beta<\alpha\leq1$. Assuming in addition that the semigroup $T_{t}=e^{-tL}$ satisfies the $\Gamma^2\geq0$ criterion (\ref{Gamma_2}), we have that 
\begin{eqnarray}
\|f\|_{BMO(L^\alpha)} \simeq \|f\|_{bmo(L^\alpha)}\simeq \|f\|_{bmo(L^\beta)},\label{BMO=bmo}
\end{eqnarray}
for all $0<\beta,\alpha<1.$ In particular, \begin{eqnarray}
c{ (1-\alpha)^2}\|f\|_{BMO(L^\alpha)}\leq \|f\|_{BMO(\sqrt L)} \leq c \|f\|_{BMO(L^\alpha)}, \label{BMO1/2}
\end{eqnarray}
for all $\frac12<\alpha<1$.
\end{lemma}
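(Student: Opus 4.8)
The four inequalities are cumulative, so I would prove them in the order \eqref{BMO<bmo}, \eqref{alpha<beta}, \eqref{BMO=bmo}, \eqref{BMO1/2}. For \eqref{BMO<bmo} I would follow the square-function argument of \cite{JM12,M08}: write $f-T_{t,\beta}f=-\int_0^t(s\partial_sT_{s,\beta}f)\,\tfrac{ds}{s}$ (or, equivalently, telescope $f-T_{t,\beta}f=\sum_{k\ge0}(T_{t2^{-k-1},\beta}-T_{t2^{-k},\beta})f$), apply Cauchy--Schwarz with scalar weights to bring $T_{t,\beta}$ inside, and control the resulting averages of $|s\partial_sT_{s,\beta}f|^2=|sL^\beta T_{s,\beta}f|^2$ by $\|f\|_{\mathrm{bmo}(L^\beta)}^2$; the bookkeeping uses the Kadison--Schwarz inequality \eqref{cp} together with the elementary bound $T_{2s,\beta}|T_{s,\beta}f|^2-|T_{2s,\beta}f|^2\le T_{2s,\beta}|f|^2-|T_{2s,\beta}f|^2$ (the Jensen step already used for \eqref{1+alpha}). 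The factor $\tfrac{4}{1-\beta}$ reflects the quasi-monotonicity bound $|s\partial_sT_{s,\beta}f|\lesssim\tfrac{1}{1-\beta}T_{\beta s,\beta}|f|$ supplied by Corollary \ref{Ts}.

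For \eqref{alpha<beta} I would use that $L^\beta=(L^\alpha)^{\beta/\alpha}$ with $\beta/\alpha<1$, so that \eqref{idpy} applied to the semigroup $(T_{\cdot,\alpha})$ gives $T_{t,\beta}f=\int_0^\infty T_{u,\alpha}f\,\phi_{t,\beta/\alpha}(u)\,du$, whence $f-T_{t,\beta}f=\int_0^\infty(f-T_{u,\alpha}f)\,\phi_{t,\beta/\alpha}(u)\,du$. Operator convexity of $x\mapsto x^*x$ and linearity then give $T_{t,\beta}|f-T_{t,\beta}f|^2\le\iint T_{v,\alpha}|f-T_{u,\alpha}f|^2\,\phi_{t,\beta/\alpha}(u)\phi_{t,\beta/\alpha}(v)\,du\,dv$. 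The crux is a scale-robustness bound for $\|T_{v,\alpha}|f-T_{u,\alpha}f|^2\|$: for $v\ge u$ this is $\le\|f\|_{\mathrm{BMO}(L^\alpha)}^2$ because $T_{v,\alpha}|f-T_{u,\alpha}f|^2=T_{v-u,\alpha}(T_{u,\alpha}|f-T_{u,\alpha}f|^2)$, and for $v<u$ it is at most a polynomial in $\log(u/v)$ times $\|f\|_{\mathrm{BMO}(L^\alpha)}^2$, obtained by telescoping $f-T_{u,\alpha}f$ through the $O(\log_2(u/v))$ dyadic scales between $v$ and $u$ and using \eqref{cp} with Cauchy--Schwarz. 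Substituting this into the double integral and invoking the logarithmic moment bounds \eqref{lnuv}, \eqref{lnuv2} and their higher-power analogues, which grow polynomially in $\alpha/\beta$, produces the factor $\tfrac{c\alpha}{\beta}$.

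Granting \eqref{BMO<bmo} and \eqref{alpha<beta} (and the evident variants with $\mathrm{bmo}$ in place of $\mathrm{BMO}$, proved the same way), \eqref{BMO=bmo} reduces to the single reverse inequality $\|f\|_{\mathrm{bmo}(L^\alpha)}\lesssim\|f\|_{\mathrm{BMO}(L^\alpha)}$, which is where $\Gamma^2\ge0$ enters. For this I would use Meyer's identity \eqref{Meyeralpha}, $T_{s,\alpha}|f|^2-|T_{s,\alpha}f|^2=2\int_0^sT_{s-t,\alpha}\Gamma_{L^\alpha}(T_{t,\alpha}f)\,dt$, together with the fact that $\Gamma^2\ge0$ makes $t\mapsto T_{s-t,\alpha}\Gamma_{L^\alpha}(T_{t,\alpha}f)$ non-increasing, which is exactly \eqref{Gamma_2} for $(T_{\cdot,\alpha})$ and is available by Lemma \ref{sub}. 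This monotonicity dominates $s\,T_{s/2,\alpha}\Gamma_{L^\alpha}(T_{s/2,\alpha}f)$ from above; feeding Meyer's identity back in, now applied to the element $f-T_{s/2,\alpha}f$ so that the ``free'' term is the genuinely $\mathrm{BMO}(L^\alpha)$-controlled quantity $T_{s/2,\alpha}|f-T_{s/2,\alpha}f|^2$, closes the estimate. Finally \eqref{BMO1/2} follows from the previous parts: its upper bound is \eqref{alpha<beta} with $\beta=\tfrac12$ (the constant $\tfrac{c\alpha}{1/2}\le2c$ being absolute), and its lower bound chains \eqref{BMO<bmo} at index $\alpha$, the equivalence $\mathrm{bmo}(L^\alpha)\simeq\mathrm{bmo}(L^{1/2})$ from \eqref{BMO=bmo}, and $\mathrm{bmo}(L^{1/2})\simeq\mathrm{BMO}(L^{1/2})=\mathrm{BMO}(\sqrt L)$; the two factors of $(1-\alpha)^{-1}$ that survive combine to $(1-\alpha)^{-2}$.

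The step I expect to be the main obstacle is this $\Gamma^2$-based reverse inequality inside \eqref{BMO=bmo}: passing from the first-order quantity $\int_0^sT_{s-t,\alpha}\Gamma_{L^\alpha}(T_{t,\alpha}f)\,dt$ to an honest second-order difference controlled by $\mathrm{BMO}(L^\alpha)$ is precisely where the convexity hypothesis is used, and in the noncommutative setting every manipulation must stay at the level of completely positive maps so that \eqref{cp} and \eqref{cauchy} remain available, rather than relying on commutative pointwise identities.
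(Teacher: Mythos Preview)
Your treatment of \eqref{alpha<beta} matches the paper's: subordinate, Cauchy--Schwarz into a double integral, bound $\|T_{u,\alpha}|f-T_{v,\alpha}f|^2\|$ by a power of $1+|\ln(u/v)|$ via dyadic telescoping, and finish with \eqref{lnuv2}. The chaining for \eqref{BMO1/2} is also as in the paper. But there are two genuine gaps.

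\textbf{Gap in \eqref{BMO<bmo}.} Your square-function scheme does not close. After Cauchy--Schwarz you need a bound of the form $T_{t,\beta}|s\partial_sT_{s,\beta}f|^2\lesssim\|f\|_{\mathrm{bmo}(L^\beta)}^2$ (or the analogous estimate for the dyadic differences), but Corollary~\ref{Ts} only gives $|s\partial_sT_{s,\beta}f|\lesssim (1-\beta)^{-1}T_{\beta s,\beta}|f|$, which is an $L^\infty$ bound, not a $\mathrm{bmo}$ bound. The ``elementary bound'' you cite, $T_{2s,\beta}|T_{s,\beta}f|^2-|T_{2s,\beta}f|^2\le T_{2s,\beta}|f|^2-|T_{2s,\beta}f|^2$, is in fact a $\Gamma^2$ consequence (cf.\ the proof of \eqref{1+alpha}), whereas \eqref{BMO<bmo} is stated \emph{without} $\Gamma^2$. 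The paper's device is different and is exactly what is missing: from \eqref{csf} the map $S=2T_{t,\beta}-T_{\gamma t,\beta}$ with $\gamma=2^{1/K_1}$ is completely positive and unital, so Kadison--Schwarz for $S$ gives the pointwise inequality
\[
|T_{t,\beta}f-T_{\gamma t,\beta}f|^2\;\le\;2\bigl(T_{t,\beta}|f|^2-|T_{t,\beta}f|^2\bigr),
\]
hence $\sup_t\|T_{t,\beta}f-T_{2t,\beta}f\|_\infty\le K_1\sqrt{2}\,\|f\|_{\mathrm{bmo}(L^\beta)}$ after a $K_1$-step telescope. This, together with the split $\|T_{t,\beta}|f-T_{t,\beta}f|^2\|\le\|f-T_{t,\beta}f\|_{\mathrm{bmo}}^2+\|T_{t,\beta}f-T_{2t,\beta}f\|_\infty^2$, is how \eqref{BMO<bmo} is obtained.

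\textbf{Gap in \eqref{BMO=bmo}.} Your reduction is incomplete. Granting \eqref{alpha<beta}, \eqref{BMO<bmo}, their ``$\mathrm{bmo}$ variants'', and $\mathrm{bmo}(L^\alpha)\lesssim\mathrm{BMO}(L^\alpha)$, you obtain $\mathrm{BMO}(L^\alpha)\simeq\mathrm{bmo}(L^\alpha)$ at each fixed $\alpha$ and the one-sided comparison $\mathrm{bmo}(L^\beta)\lesssim\mathrm{bmo}(L^\alpha)$ for $\beta<\alpha$, but \emph{not} the reverse $\mathrm{bmo}(L^\alpha)\lesssim\mathrm{bmo}(L^\beta)$. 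That reverse direction---larger exponent controlled by smaller---is a separate $\Gamma^2$-based step. The paper proves it by comparing $T_{t,\beta}$ with its own subordinate $P_t=T_{t,\beta/2}$: using Meyer's identity \eqref{Meyeralpha}, \eqref{Gamma_2}, and a change of variables one shows
\[
\int_0^t T_{t-s,\beta}\Gamma_{L^\beta}(T_{s,\beta}P_{\sqrt t}f)\,ds\;\le\;\tfrac56\,\|f\|_{\mathrm{bmo}(L^\beta)}^2,
\]
which after the triangle inequality yields $\|f\|_{\mathrm{bmo}(L^\beta)}\le C K_1\|f\|_{\mathrm{bmo}(L^{\beta/2})}$; iterating and combining with \eqref{alpha<beta} closes the equivalence. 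Your sketch for $\mathrm{bmo}\lesssim\mathrm{BMO}$ via monotonicity of $t\mapsto T_{s-t,\alpha}\Gamma_{L^\alpha}(T_{t,\alpha}f)$ is in the right spirit (the paper just quotes \cite[Proposition~2.4]{JM12} here), but it does not substitute for the missing cross-exponent comparison.
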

\begin{proof}
The argument for (\ref{alpha<beta}) is the same as that for the second inequality of \cite[Theorem 2.6]{JM12}.  We sketch it here. By the Cauchy-Schwartz inequality,
\begin{eqnarray*}  
 T_{t,\beta}|f -T_{t,\beta}f|^2 &=&  \int_0^\infty \phi_{t,\frac \beta\alpha }(u)T_{u,\alpha}  | \int_0^\infty \phi_{t,\frac \beta\alpha }(v)(f -T_{v,\alpha}f)dv|^2 du\\
 &\leq&  \int_0^\infty \int_0^\infty \phi_{t,\frac \beta\alpha }(u) \phi_{t,\frac \beta\alpha }(v)T_{u,\alpha}  | f -T_{v,\alpha}f |^2 dudv.
 \end{eqnarray*}
It is easy to see that $\|T_{u,\alpha}  | f -T_{v,\alpha}f |^2\|\leq (1+|\ln \frac uv|)\|f\|_{BMO(L^\alpha)}^2$, so we get (\ref{alpha<beta}) from (\ref{lnuv2}). 

For the rest of this proof, we   use $\Gamma$ for $\Gamma_{L^\beta}$, $T_t$ for $T_{t,\beta}$ and $P_t$ for $T_{t,\frac\beta2}$  to simplify the notation. Since $T_t$ has the quasi monotone property (\ref{csf}), we have 
\begin{eqnarray}
P_t=\int_0^\infty T_u\phi_{t,\frac12}(u)du\geq \int_0^{t^2} \left(\frac u{t^2}\right)^{K_1}T_{t^2} \phi_{t,\frac12}(u)du\geq \frac1{100K_1}T_{t^2}.
\end{eqnarray} We now prove (\ref{BMO<bmo}). Note
\begin{eqnarray*}  
 \|T_{t}|f -T_{t}f|^2 \|&=& \| T_{t}|f-T_tf|^2-|T_{t}f-T_tT_ t f|^2 +|T_{t}f-T_tT_ t f|^2\|\\
 &\leq&\|f-T_tf\|^2_{bmo(L^\beta)}+\|T_{t}f-T_{2t} f\|^2.
 \end{eqnarray*}
 Let $\gamma=2^{\frac1{K_1}}$   and $S=2T_t-T_{\gamma t}$. Then $S$ is a unital completely positive map because of (\ref{csf}). We have 
 \begin{eqnarray*}
 |T_{t}f-T_{\gamma t} f|^2 +|Sf-T_{t}f|^2
 &=&-2 |T_tf|^2 +|T_{\gamma t}f|^2+|Sf|^2\\
 &\leq&-2|T_tf|^2 +T_{\gamma t}|f|^2+ S|f|^2\\
 &\leq&-2|T_tf|^2 +2T_{t}|f|^2  \\
 &\leq& 2\|f\|^2_{bmo(L^\beta)}.
 \end{eqnarray*}
 We get  by the triangle inequality that
 $$\|T_{t}f-T_{2t} f\| \leq K_1\sup_s\|T_sf-T_{\gamma s}f\|\leq \sqrt2K_1\|f\|_{bmo(L^\beta)}. $$
 Therefore,
$$ \|f\|_{BMO (L^\beta)} \leq \sqrt{4+2K^2_1}\|f\|_{bmo (L^\beta)}.$$

To prove (\ref{BMO=bmo}), we note that the $\Gamma^2\geq0$ assumption for $L$ passes to $L^\alpha$ by Lemma \ref{sub}. The inequality $\|f\|_{bmo}\leq (2+\sqrt2\|f\|_{BMO})$ is proved in \cite[Proposition 2.4]{JM12} assuming the $\Gamma^2\geq0$ criterion. Together with (\ref{BMO<bmo}), we get $\|f\|_{BMO(L^\alpha)}\simeq \|f\|_{bmo(L^\alpha)}$. We now show the second equivalence in (\ref{BMO=bmo}). Note,
\begin{eqnarray*}
 \int_0^tT_{t-s}\Gamma(T_sP_{\sqrt t}f)ds&=&\int_0^t T_{t-s}\Gamma\left(\int_0^\infty \phi_{\sqrt t,\frac12}(v)T_vT_sfdv\right)ds\\
 &\leq&\int_0^\infty \phi_{\sqrt t,\frac12}(v)\int_0^t T_{t-s}\Gamma(T_vT_sf)dsdv\\
 &\leq&\int_0^\infty \phi_{\sqrt t,\frac12}(v)\int_0^t T_{t+v-\frac{t+v}ts}\Gamma(T_{\frac{t+v}ts}f)dsdv\\
(u={\frac{t+v}ts}) &\leq&\int_0^\infty \phi_{\sqrt t,\frac12}(v)\frac{t}{t+v}\int_0^{t+v} T_{t+v-u}\Gamma(T_{u}f)dsdv\\
 (\ref{Meyeralpha})&=&\int_0^\infty \phi_{\sqrt t,\frac12}(v)\frac{t}{t+v}(T_{t+v}|f|^2-|T_{t+v}f|^2)dv\\
 &\leq&\int_0^\infty \phi_{\sqrt t,\frac12}(v)\frac{t}{t+v}\|f\|_{bmo(L^\beta)}^2dv<\frac56\|f\|_{bmo(L^\beta)}^2.
\end{eqnarray*}
We then have
\begin{eqnarray*}
 &&(T_{t}|f|^2-|T_{t}f|^2)^\frac12\\
 &\leq&  (T_{t}|f-P_{\sqrt t}f|^2-|T_{t}f-T_tP_{\sqrt t}f|^2)^\frac12+(T_{t}|P_{\sqrt t}f|^2-|T_tP_{\sqrt t}f|^2)^\frac12\\
 &\leq&100K_1(P_{\sqrt t}|f-P_{\sqrt t}f|^2 )^\frac12+\sqrt\frac56\|f\|_{bmo(L^\beta)}\\
 &\leq&100K_1 \|f\|_{bmo(L^{\frac\beta2})}+\sqrt\frac56\|f\|_{bmo(L^\beta)},
 \end{eqnarray*}
 so
 $$\|f\|_{bmo(L^\beta)}\leq 1200K_1 \|f\|_{bmo(L^{\frac\beta2})}.$$
Therefore, $$\|f\|_{BMO(L^\beta)}\leq 10000K_1^2 \|f\|_{BMO(L^{\frac\beta2})}.$$
Applying (\ref{alpha<beta}), we have $\|f\|_{BMO(L^\alpha)}\simeq  \|f\|_{BMO(L^{\beta})}$ for all $0<\beta,\alpha<1$.
\end{proof}

\noindent {\bf Remark.} The equivalence (\ref{BMO=bmo}) fails for $\alpha=1$ in general. See   Section 4, Example 2.

\section{Imaginary powers and $H^\infty$-calculus}

\subsection{$H^\infty$-calculus.} 
Let us review some  definitions and basic facts about $H^\infty$-calculus. We refer the readers to \cite{CDMY96,JMX06,Ha06} for details.
For $0<\theta<\pi$, let $S_\theta$ be the following open sector of the complex plane:
$$S_\theta=\{z\in {\Bbb C}, |\arg z|<\theta\}.$$
Recall that  we say a  closed   operator $A$ on a Banach space $X$ is a {\it sectorial } operator of type $\omega<\pi$ if the spectrum of $A$ is contained in $\overline{S}_\omega$, the closure of $S_\omega$, 
and for any $\theta, \omega<\theta<\pi, z\notin S_\theta$, there exists $c_\theta$ such that
$$\|z(z-A)^{-1}\|\leq c_\theta.$$ 
We will assume that the domain of $A$ is dense  in $X$ (or weak* dense in $X$ when $X$ is a dual space).
We may also  assume that $A$ has dense range and is one to one by considering $A+\varepsilon$ (see \cite[Lemma 3.2, 3.5]{JMX06}).

Let $H^\infty(S_\eta)$ be the space of all bounded analytic functions on $S_\eta$ and $H_0^\infty(S_\eta) $ be the subspace of the functions $\Phi\in H^\infty(S_\eta)$ with an extra decay property that
$$|\Phi(z)|\leq \frac {c|z|^r}{(1+|z|)^{2r}},$$
for some $c,r>0$.
Then for any $\Phi\in H_0^\infty(S_\eta)$, and $\theta>\eta$,
\begin{eqnarray}\label{PhiA}
\Phi(A)=\frac1{2\pi i}\int_{\gamma_\theta}\Phi(z)(z-A)^{-1}dz
\end{eqnarray}
  is a well defined bounded operator on $D(A)$ and its (weak*) extension is bounded on $X$. Here $\gamma_\theta$ is the boundary of $S_\theta$ oriented counterclockwise.
For general $\Phi\in H^\infty(S_\eta)$, set
\begin{eqnarray}\label{Phi(A)}
\Phi(A)=\psi(A)^{-1}(\Phi\psi)(A),
\end{eqnarray}
with  $\psi(z)=\frac z{(1+z)^2}$. 
It turns out that the so defined $\Phi(A)$ is   a closed (weak*) densely defined operator, which may not be bounded, and it coincides with $\Phi(A)$ defined as in (\ref{PhiA}) for $\Phi\in H_0^\infty(S_\eta)$.
Moreover, these definitions are consistent with the definitions in the ``older'' functional calculus.

\begin{definition}
We say a (weak*) densely defined sectorial operator $A$ of type $\omega$  has bounded $H^\infty(S_\eta)$-calculus, $ \omega<\eta<\pi$, if the map $\Phi(A)$ extends to a bounded operator on $X$ and there is a constant $C$ such that
\begin{eqnarray}\label{Hinfty}
 \|\Phi(A)\|\leq C\|\Phi\|_{H^\infty(S_\eta)}
 \end{eqnarray} for any bounded analytic function $\Phi\in H^\infty(S_\eta)$.
\end{definition}
 
\noindent {\bf Remark.} Suppose a densely defined sectorial $A$ has bounded $H^\infty(S_\eta)$-calculus on $Y$ and suppose  $Y$ is a weak* dense subspace of a dual Banach space $X$. 
Then the weak* extension of $\Phi(A)$ onto $X$, still denoted by $\Phi(A)$, is bounded  and satisfies (\ref{Hinfty}) with the same constant. So a 
weak* dense sectorial operator $A$ has $H^\infty$-calculus on $X$ if and only if it has $H^\infty$-calculus on   the norm closure of $D(A)$.
\medskip

   The negative infinitesimal generator $L$ of any uniformly bounded  (weak*) strong continuous semigroup on a dual Banach space $X$ is actually a   (weak*) densely defined sectorial operator of type $\frac \pi2$   and $L^\alpha$ is of type $\frac {\alpha\pi}2$ on $X$. Cowling, Duong, and Hiebe $\&$ Pr\"uss (\cite{Co83, Du89,HP98}) prove that the negative infinitesimal generator of a semigroup of positive contractions on $L^p, 1<p<\infty$ always has the bounded $H^\infty(S_\eta)$-calculus for any $\eta>\frac\pi2$. One cannot hope to extend this to    $p=\infty$. We will prove that  the associated  BMO$(\sqrt L)$  space is   a good alternative, as desired. 

\begin{lemma}\label{Mc}
Suppose $A$ is a densely defined sectorial operator of type $\omega<\pi/2$ on a Banach space $X$. Assume 
$\int_0^\infty Ae^{-tA}a(t)dt$ is bounded on $X$ with norm smaller than $C$ for any  function $a(t)$ with values in $\pm1$.
Then $A$ has a bound $H^\infty(S_\eta^0)$ calculus for any $\eta>\pi/2$. 
\end{lemma}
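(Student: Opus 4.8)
The plan is to represent $\Phi(A)$, for $\Phi\in H^\infty(S_\eta)$, as a single semigroup integral $\int_0^\infty Ae^{-tA}g_\Phi(t)\,dt$ with $\|g_\Phi\|_{L^\infty}\lesssim\|\Phi\|_\infty$, and then to invoke the hypothesis after reducing the scalar multiplier $g_\Phi$ to the $\pm1$-valued functions it covers. By the convergence lemma of $H^\infty$-calculus (\cite{CDMY96,Ha06}) it suffices to produce a constant $c_\eta$ with $\|\Phi(A)\|\le c_\eta\|\Phi\|_{\infty}$ for all $\Phi\in H^\infty_0(S_\eta)$: choosing $\varphi_n\in H_0^\infty(S_\eta)$ with $\varphi_n\to1$ pointwise and $\sup_n\|\varphi_n\|_{H^\infty(S_\eta)}<\infty$, the functions $\varphi_n\Phi$ lie in $H_0^\infty(S_\eta)$, are uniformly bounded, and converge to $\Phi$, so the $H_0^\infty$-bound passes to the limit. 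As usual one may also replace $A$ by $A+\varepsilon$ and let $\varepsilon\downarrow0$, so $A$ may be assumed injective with dense range.

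Fix $\theta\in(\pi/2,\eta)$. Since $A$ has type $\omega<\pi/2$, $-A$ generates a bounded analytic semigroup, and on the two rays forming $\partial S_\theta$ one has $\Re z<0$, so
\[
(z-A)^{-1}=-\int_0^\infty e^{sz}e^{-sA}\,ds .
\]
This is the only place the condition $\eta>\pi/2$ is used. Substituting this into the Cauchy integral $\Phi(A)=\frac1{2\pi i}\int_{\partial S_\theta}\Phi(z)(z-A)^{-1}\,dz$ (absolutely convergent because $\Phi\in H_0^\infty$), then using $e^{-sA}=\int_s^\infty Ae^{-tA}\,dt$ and Fubini, one gets
\[
\Phi(A)=\int_0^\infty Ae^{-tA}g_\Phi(t)\,dt,\qquad g_\Phi(t)=-\frac1{2\pi i}\int_{\partial S_\theta}\Phi(z)\,\frac{e^{tz}}{z}\,dz,
\]
the constant $\frac1{2\pi i}\int_{\partial S_\theta}\Phi(z)z^{-1}\,dz$ being $0$ (shrink $\theta$ to $0$). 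Rescaling $z\mapsto z/t$ gives $g_\Phi(t)=-\frac1{2\pi i}\int_{\partial S_\theta}\Phi(z/t)\,e^z z^{-1}\,dz$; from the decay of $\Phi$ at $0$ and at $\infty$ one checks $|g_\Phi(t)|\lesssim\min(t,t^{-1})^{\delta}$ for some $\delta>0$, so that $\int_0^\infty Ae^{-tA}g_\Phi(t)\,dt$ converges absolutely.

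The crux is the uniform estimate $\sup_{t>0}|g_\Phi(t)|\le c_\theta\|\Phi\|_{H^\infty(S_\theta)}$, with a constant \emph{independent of the decay data of $\Phi$} (otherwise the limiting step of the first paragraph is useless). On $\partial S_\theta$ one has $|e^zz^{-1}|=|z|^{-1}e^{|z|\cos\theta}$; since $\cos\theta<0$, the exponential handles $\{|z|\ge1\}$ against the bound $|\Phi|\le\|\Phi\|_\infty$. On $\{|z|<1\}$ the factor $|z|^{-1}$ is not integrable, so one cannot pass to absolute values: one must pair the rays $\arg z=\theta$ and $\arg z=-\theta$ and exploit the oscillation of $e^z$ along the arcs together with the Cauchy estimate $|z\Phi'(z)|\lesssim_{\eta,\theta}\|\Phi\|_\infty$ (which controls $\Phi(\rho e^{i\theta})-\Phi(\rho e^{-i\theta})$) and the vanishing of $\Phi$ at the vertex. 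I expect this oscillatory estimate to be the only genuine work in the proof; everything else is bookkeeping with absolutely convergent integrals and a soft extreme-point argument.

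Finally one feeds $g_\Phi$ into the hypothesis. Any simple function with values in $[-1,1]$ is a convex combination of $\pm1$-valued step functions, so by linearity $\|\int_0^\infty Ae^{-tA}a(t)\,dt\|\le C$ for every such $a$; approximating $g_\Phi/\|g_\Phi\|_\infty$ in the sup-norm by such simple functions with comparable decay and using the absolute convergence noted above, one obtains $\|\int_0^\infty Ae^{-tA}g_\Phi(t)\,dt\|\le 2C\|g_\Phi\|_\infty$ (the factor $2$ accounting for the complex-valued $g_\Phi$). Hence $\|\Phi(A)\|\le 2Cc_\theta\|\Phi\|_\infty$ for $\Phi\in H_0^\infty(S_\eta)$, and the convergence lemma upgrades this to the bounded $H^\infty(S_\eta)$-calculus for every $\eta>\pi/2$.
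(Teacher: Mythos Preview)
Your strategy is correct and is precisely the content the paper outsources: the paper's proof is a single sentence invoking \cite[Example 4.8]{CDMY96} after choosing $a(t)=\mathrm{sign}\,\langle Ae^{-tA}u,v\rangle$, which converts the hypothesis into the weak estimate $\int_0^\infty|\langle Ae^{-tA}u,v\rangle|\,dt\le C\|u\|\|v\|$.  Your representation $\Phi(A)=\int_0^\infty g_\Phi(t)Ae^{-tA}\,dt$ together with $\|g_\Phi\|_\infty\lesssim\|\Phi\|_\infty$ is exactly how that example in \cite{CDMY96} is established, and your extreme-point reduction is a repackaging of the sign trick.  So the route is not genuinely different---you are reproving the cited lemma rather than citing it.

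There is, however, a gap in your sketch of the crucial bound $\sup_t|g_\Phi(t)|\le c_\theta\|\Phi\|_\infty$.  The Cauchy estimate $|z\Phi'(z)|\lesssim\|\Phi\|_\infty$ only yields $|\Phi(\rho e^{i\theta})-\Phi(\rho e^{-i\theta})|\lesssim\|\Phi\|_\infty$, a constant bound that is \emph{not} integrable against $d\rho/\rho$ near the origin; the ``vanishing of $\Phi$ at the vertex'' gives no uniform rate, so the ingredients you list do not close the argument.  The clean fix is a contour deformation, not an oscillatory estimate: in the rescaled integral $\int_{\partial S_\theta}\Phi(z/t)\,e^z z^{-1}\,dz$, split at $|z|=1$; on $\{|z|\ge1\}$ the factor $e^{|z|\cos\theta}$ gives $C_\theta\|\Phi\|_\infty$; on $\{|z|<1\}$ write $e^z=(e^z-1)+1$, note $(e^z-1)/z$ is bounded, and observe that by Cauchy's theorem $\int_{\partial S_\theta,\,|z|<1}\Phi(z/t)z^{-1}\,dz$ equals minus the integral over the circular arc $\{|z|=1\}\cap S_\theta$, which is bounded by $2\theta\|\Phi\|_\infty$.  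This yields the uniform bound with no dependence on the $H_0^\infty$ decay data, after which your convergence-lemma step goes through.
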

\begin{proof} This  is a consequence of \cite[Example 4.8]{CDMY96} by setting $a(t)$ to be the sign of $  \langle Te^{-tT}u, v\rangle$ for any pair $(u,v)$ in a dual pair $( X, Y)$.
\end{proof}

We are going to prove that the negative generator $L$ of a semigroup of positive contractions satisfies the assumptions of Lemma \ref{Mc}. We follow an idea of E. Stein and consider   scalar valued functions $a(t)$ such that
 \begin{eqnarray}\label{a(t)}
 s\int_s^\infty\frac {|a(v-s)|^2}{v^2}dv\leq c^2_a ,
 \end{eqnarray}
 for all $s>0$ and some constant $c_a$. Define $M_a$ by
\begin{eqnarray}\label{Ma}
M_a(f)=\int_0^\infty a(t)\frac {\partial T_{t,\alpha}f}{\partial t}dt=\int_0^\infty a(t) L^\alpha T_{t,\alpha}f dt,
\end{eqnarray}
for $f\in L^\infty, 0<\alpha<1$. For now, we assume  $a$ is supported on a compact subset of $(0,\infty)$ so we do not worry about the convergence of the integration.
\begin{lemma}\label{keybmo} Assume that $L $ generates a weak* continuous semigroup of positive contractions on $L^\infty$ satisfying the $\Gamma^2\geq0$ criterion (\ref{Gamma_2}). We have
\begin{eqnarray}
\|M_a(f)\|_{bmo (L^\alpha)} &\leq& \frac{cc_a}{(1-\alpha)^2}\|f\|_{bmo (L^\alpha)},\label{bmobmo}\\
\|M_a(f)\|_{BMO (L^\alpha)} &\leq& \frac{cc_a}{(1-\alpha)^3}\|f\|_{BMO (L^\alpha)},\label{BMOBMO}\\
\|M_a(f)\|_{BMO (L^\alpha)} &\leq&  \frac{cc_a}{(1-\alpha)^2}\|f\|_{ L^\infty},\label{LinftyBMO}
\end{eqnarray}
for any $f\in L^\infty, 0<\alpha<1$.
\end{lemma}

\begin{proof} We consider the case $\alpha\geq\frac34$ only. The case $\alpha< \frac34$ is easier and follows from this case by subordination. 
  Recall that the $\Gamma^2\geq0$ assumption for $L$ passes to $L^\alpha$ by Lemma \ref{sub}, and $T_{t,\alpha}(L^\infty)\subset D(L^{2\alpha}), L^{2\alpha}T_{t,\alpha}=\partial_t^2T_{t,\alpha}$. In this proof, we use $\Gamma$ for $\Gamma_{L^\alpha}$ the gradient form associated with $L^\alpha$, $T_t$ for $T_{t,\alpha}$ and $P_t$ for $T_{t,\frac\alpha2}$  to simplify the notation.  Let $r=\frac1{1-\alpha}>4$. We have that 
\begin{eqnarray*}
\int_0^tT_{rt-s}\Ga(T_sf)ds&=&\int_0^tT_{rt-s}\Ga\left(\int_s^\infty L^\alpha T_vfdv\right)ds\\
&\leq&\int_0^tT_{rt-s}\int_s^\infty\Ga( L^\alpha T_vf)v^\frac32dv\int_s^\infty v^{-\frac32}dvds\\
&=&\int_0^tT_{rt-s}\int_s^\infty\Ga( L^\alpha T_vf)v^\frac32dv2s^{-\frac12}ds\\
&=&\int_0^\infty \int_0^{t\wedge v}2s^{-\frac12}T_{rt-s}ds\Ga( L^\alpha T_vf)v^\frac32 dv
\end{eqnarray*}

Let $S_v= \int_0^{t\wedge v}2s^{-\frac12}T_{rt-s}ds$.
So by Lemma \ref{Meyer} and the $\Gamma^2\geq0$ criterion,
\begin{eqnarray*}
\|f\|^2_{bmo }&=&\sup_t\left\|\int_0^{rt}T_{rt-s}\Ga(T_sf)ds\right\|\\
 &\leq& \left\|\sup_t \int_0^{rt}T_{rt-\frac sr}\Ga(T_{\frac sr}f)ds\right\|\\
&=&\left\| \sup_t r\int_0^{t}T_{rt-  s}\Ga(T_{s}f)ds\right\|\\
&\leq&\sup_t r\left\|\int_0^\infty   S_v \Ga( L^\alpha T_v)v^\frac32  dv\right\|.\end{eqnarray*}

So,
\begin{eqnarray*}
\frac1r\| M_af\|^2_{bmo} &\leq&\left\|\int_0^\infty   S_v\Ga( L^\alpha T_vM_a(f))v^\frac32dv\right\| \\
 &=&
\left \|\int_0^\infty S_v\Ga( T_v\int_0^\infty a(u)  L^{2\alpha}T_u fdu)v^\frac32 dv\right\| \\
&=&\left\|\int_0^\infty S_v \Ga(\int_0^\infty a(u)  L^{2\alpha}T_{u+v} fdu)v^\frac32  dv\right\| \\
 &=&
  \left\|\int_0^\infty v^\frac32S_v \Ga(\int_v^\infty a(u-v)\frac 1uu   L^{2\alpha}T_u fdu)dv\right\| \\
\text{({\rm Inequality}\ (\ref{cauchy}))} &\leq& \left\|\int_0^\infty v^\frac32S_v\bigg(\int_v^\infty \frac{%
|a|^2}{u^2}du\int_v^\infty \Ga\left(u L^{2\alpha}T_u 
f\right)du\bigg)dv\right\| \\
 &\leq&
  c_a^2\left\|\int_0^\infty S_v\bigg(\int_v^\infty  \Ga\left(u L^{2\alpha}T_u f\right)du\bigg) v^\frac12  dv\right\| \\
&=&
  c_a^2\left\|\int_0^\infty \int_0^{u\wedge t} v^\frac12S_v  dv\Ga(u  L^{2\alpha}T_{u} f)du\right\|.
\end{eqnarray*}
Note $K_1\leq r$ and $\sup_{r>4}(\frac2{1+\alpha}\frac r{r-1})^r\leq c$. By (\ref{csf}), we have, for $u\leq t$, 
\begin{eqnarray*}
  \int_0^{t\wedge u} v^\frac12 S_v dv& \leq& \int_0^{t\wedge u}v^\frac12\int_0^{t\wedge v}s^{-\frac12}T_{ \frac {1+\alpha }2(rt-u)}\left(\frac 2{1+\alpha }\cdot\frac {r}{r-1}\right)^{r}dsdv\\
  &\leq& cT_{\frac {1+\alpha }2(rt-u)}  {t^2\wedge u^2}.
  \end{eqnarray*}
  Applying  (\ref{cdsf22}), we get
  \begin{eqnarray*}
  \left\|\int_0^t \int_0^{u\wedge t} v^\frac12S_v  dv\Ga(u  L^{2\alpha}T_{\frac u2} T_{\frac u2}f)du\right\|&\leq&
  cr^2\left\|\int_0^t \frac{T_{\frac {\alpha u}2 }}{u^2}\int_0^{u\wedge t} v^\frac12S_v  dv\Ga( T_{\frac u2}f)du\right\|\\
  &\leq& cr^2\left\|\int_0^t  T_{\frac{(1+\alpha)rt}2-\frac u2}\Ga( T_{\frac u2}f)du\right\| \\
  &\leq&
  cr^2 \left\|\int_0^{\frac t2}  T_{\frac t2-s}\Ga( T_{s}f)ds\right\|\leq cr^2\|f\|^2_{bmo }.
\end{eqnarray*}
For $\alpha^{-n}t<u\leq \alpha^{-n-1}t,n\geq 0$, we use 
$$
  \int_0^{t\wedge u} v^\frac12 S_v dv \leq \int_0^{t\wedge u}v^\frac12\int_0^{t\wedge v}2s^{-\frac12}T_{rt-t}\left(\frac {r}{r-1}\right)^{r}dsdv
  \leq cT_{rt-t}  {t^2\wedge u^2}.$$
Similar to  (\ref{cdsf22}), we get $\Gamma(u^2L^{2\alpha}T_{ \alpha^{-n}t}f)\leq cr^2T_{ 2\alpha^{-n}t-u }\Gamma(f)$ because $\frac{r-1}{r-2}=\frac1{2-\alpha^{-1}}\leq\frac{ \alpha^{-n}t }{ 2\alpha^{-n}t-u} \leq1$. So
$$\Ga(u  L^{2\alpha}T_{ \alpha^{-n}t} T_{  u-\alpha^{-n}t}f)|\leq  c\frac{r^2 }{u^2}T_{ 2\alpha^{-n}t-u } \Ga( T_{ u-\alpha^{-n}t}f)
$$
 Therefore, \begin{eqnarray*}
 && \left\|\int_{\alpha^{-n}t}^{\alpha^{-n-1}t} \int_0^{u\wedge t} v^\frac12S_v  dv\Ga(u  L^{2\alpha}T_{\frac u2} T_{\frac u2}f)du\right\|\\
  &\leq&
  cr^2 \left\| \int_{\alpha^{-n}t}^{\alpha^{-n-1}t} \frac{  {t^2\wedge u^2}}{u^2}T_{ 2\alpha^{-n}t-u } \Ga( T_{ u-\alpha^{-n}t}f)du\right\|\\
 &=& cr^2\alpha^{2n}\left\| \int_{0}^{\alpha^{-n-1}t(1-\alpha)} T_{ \alpha^{-n}t-s } \Ga( T_{ s}f)ds\right\|\\
  &\leq&
  cr^2\alpha^{2n}  \|f\|^2_{bmo }.
\end{eqnarray*}
Summing up for $n\geq 0$, we get\begin{eqnarray*}
  \left\|\int_{t}^{\infty} \int_0^{u\wedge t} v^\frac12S_v  dv\Ga(u  L^{2\alpha}T_{\frac u2} T_{\frac u2}f)du\right\| \leq
  cr^3  \|f\|^2_{bmo }.
\end{eqnarray*}
Combining the estimates above, we conclude that
$$\|M_a(f)\|_{bmo (L^\alpha)}\leq cc_ar^2\|f\|_{bmo (L^\alpha)}.$$
Applying (\ref{BMO=bmo}), we actually get 
$$\|M_a(f)\|_{BMO (L^\alpha)}\leq c  c_ar^3 \|f\|_{BMO (L^\alpha)}\leq c  c_a r^3\|f\|_{L^\infty}.$$
But we wish to get a better estimate.
Note \begin{eqnarray*}
(T_t-T_{2t})M_a(f)&=&\int_0^\infty a(s)\partial_s (T_{t+s}-T_{2t+s})fds\\
&=&\int_t^\infty a(s-t)\partial_s (T_{s}-T_{t+s})fds\\
&\leq&\left(\int_t^\infty \frac{|a(s-t)|^2}{s^2}ds\right)^\frac12\left( \int_t^\infty s^2\left|\int_0^t\partial^2_s  T_{v+s}fdv\right|^2ds\right)^\frac12\\
&\leq&{c_a}\left( \int_t^\infty {s^2} \int_0^t |\partial^2_sT_{v+s}f|^2dvds\right)^\frac12\\
(by\ (\ref{cdsf2}))&\leq&\frac{25c_a}{(1-\alpha)^2}\left( \int_t^\infty {s^{-2}} \int_0^t  T_{\alpha(v+s)}|f|^2dvds\right)^\frac12.
 \end{eqnarray*}
 Therefore
 \begin{eqnarray*}
\|(T_t-T_{2t})M_a(f)\|_{L^\infty}\leq \frac{25c_a}{(1-\alpha)^2}\|f\|_{L^\infty},
\end{eqnarray*}
and hence
$$\|M_a(f)\|_{BMO (L^\alpha)}\leq   \|M_af\|_{bmo (L^\alpha)}+\sup_t \|(T_t-T_{2t})M_a(f)\|_{L^\infty}\leq c r^2 c_a \|f\|_{L^\infty}.$$
\end{proof}

 Given $f\in L^\infty, g\in H^1(L^\alpha)$, let $\tilde a(t)=sign \langle L^\alpha T_{t,\alpha}f,g \rangle  a(t)$. Then $\tilde a$ satisfies (\ref{a(t)}) if $a$ does. We have from Lemma \ref{keybmo} that
\begin{eqnarray*}
\int_0^\infty |\langle a(t)L^\alpha T_{t,\alpha}f,g \rangle|dt&=&\lim_{N,M\rightarrow \infty}\int_{\frac1M}^N |\langle a(t)L^\alpha T_{t,\alpha}f,g \rangle|dt\\
&=&\lim_{N,M\rightarrow \infty}\left\langle \int_{\frac1M}^N \tilde a(t)L^\alpha T_{t,\alpha}fdt,g\right\rangle\\
&\leq&cc_a \|M_{\tilde a} f\|_{BMO(L^\alpha)}\|g\|_{H^1}\\
&\leq&\frac{cc_a}{(1-\alpha)^2} \| f\|_{ L^\infty}\|g\|_{H^1}.
\end{eqnarray*}

This shows that $\lim_{N,M\rightarrow \infty}\int_{\frac1M }^N \langle a(t)L^\alpha T_{t,\alpha}f,g \rangle dt$ exists and  $ \int_{\frac1M}^N   a(t)L^\alpha T_{t,\alpha}f dt$ weak* converges in BMO$(L^\alpha)$ as $N,M\rightarrow \infty$.  So the integration in  (\ref{Ma})  weak* converges and $M_a$ is well defined for all   $f\in L^\infty$ and $a(t)$ satisfying (\ref{a(t)}). The weak* extension of $M_a$ is then a bounded map from BMO$(L^\alpha)$ to BMO$(L^\alpha)$.  

\begin{theorem}  Suppose $a(t)$ satisfies (\ref{a(t)}). $M_a$ extends to a bounded operator from BMO$(L^\alpha)$ to BMO$(L^\alpha)$  for $0<\alpha<1$. The estimates are as in Lemma \ref{keybmo}.
\end{theorem}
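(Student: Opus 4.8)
The plan is to deduce the theorem from Lemma~\ref{keybmo}, which already yields both estimates when $a$ has compact support in $(0,\infty)$, by a truncation argument combined with a duality pairing against $H^1(L^\alpha)$.

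First I would fix $f\in L^\infty$ and $g\in H^1(L^\alpha)$ and, for $0<1/M<N<\infty$, put $a_{M,N}=a\,\mathbf 1_{[1/M,N]}$. Restricting $a$ to a smaller set only decreases the integral in~(\ref{a(t)}), so $a_{M,N}$ is compactly supported in $(0,\infty)$ and still satisfies~(\ref{a(t)}) with the same constant $c_a$; hence Lemma~\ref{keybmo} applies to $M_{a_{M,N}}$, and $\int_{1/M}^N a(t)L^\alpha T_{t,\alpha}f\,dt$ is a well-defined $L^\infty$-valued integral representing an element of $L^\infty_0(M)\subset BMO(L^\alpha)$. Following E.~Stein's device, I would replace $a_{M,N}$ by $\tilde a_{M,N}(t)=\operatorname{sign}\langle L^\alpha T_{t,\alpha}f,g\rangle\,a_{M,N}(t)$ (a unimodular modification, still compactly supported and still satisfying~(\ref{a(t)}) with the same $c_a$) chosen so that $\langle\tilde a_{M,N}(t)L^\alpha T_{t,\alpha}f,g\rangle=|a(t)|\,|\langle L^\alpha T_{t,\alpha}f,g\rangle|$. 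Pairing~(\ref{LinftyBMO}) of Lemma~\ref{keybmo}, applied to $M_{\tilde a_{M,N}}(f)$, with $g$ then gives
\[
\int_{1/M}^N|\langle a(t)L^\alpha T_{t,\alpha}f,g\rangle|\,dt=\langle M_{\tilde a_{M,N}}(f),g\rangle\le\|M_{\tilde a_{M,N}}(f)\|_{BMO(L^\alpha)}\,\|g\|_{H^1(L^\alpha)}\le\frac{cc_a}{(1-\alpha)^2}\,\|f\|_{L^\infty}\,\|g\|_{H^1(L^\alpha)}.
\]
Letting $M,N\to\infty$, monotone convergence shows $t\mapsto\langle a(t)L^\alpha T_{t,\alpha}f,g\rangle$ lies in $L^1(0,\infty)$ with the same bound on its norm, so $\lim_{M,N\to\infty}\int_{1/M}^N\langle a(t)L^\alpha T_{t,\alpha}f,g\rangle\,dt$ exists.

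Since this holds for every $g\in H^1(L^\alpha)$, the net $\int_{1/M}^N a(t)L^\alpha T_{t,\alpha}f\,dt$ weak* converges in $BMO(L^\alpha)$; I would define $M_a(f)$ to be its limit, so that taking the supremum over $\|g\|_{H^1(L^\alpha)}\le1$ yields~(\ref{LinftyBMO}). Repeating the same chain of inequalities with $\|f\|_{L^\infty}$ replaced by $\|f\|_{bmo(L^\alpha)}$ — using the first estimate of Lemma~\ref{keybmo} for the compactly supported weights $\tilde a_{M,N}$, together with $\|\cdot\|_{BMO(L^\alpha)}\simeq\|\cdot\|_{bmo(L^\alpha)}$ from Lemma~\ref{equibmo} (legitimate since the $\Gamma^2\ge0$ criterion descends to $L^\alpha$ by Lemma~\ref{sub}) — gives~(\ref{bmobmo}) up to an absolute constant. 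Finally $f\mapsto M_a(f)$ is linear and bounded for the $BMO(L^\alpha)\to BMO(L^\alpha)$ norm, so weak* density of $L^\infty_0(M)$ in $BMO(L^\alpha)$ extends it to a bounded, weak*-continuous operator on all of $BMO(L^\alpha)$, with the same bounds.

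The step I expect to require the most care is obtaining~(\ref{bmobmo}) with exactly the constant of Lemma~\ref{keybmo}, rather than only up to the $bmo$--$BMO$ equivalence. For this I would invoke lower semicontinuity of the $bmo(L^\alpha)$-seminorm for the weak* topology, giving $\|M_a(f)\|_{bmo(L^\alpha)}\le\liminf_{M,N}\|M_{a_{M,N}}(f)\|_{bmo(L^\alpha)}\le\frac{cc_a}{(1-\alpha)^2}\|f\|_{bmo(L^\alpha)}$. That lower semicontinuity should come from Meyer's identity~(\ref{Meyeralpha}), $T_{t,\alpha}|h|^2-|T_{t,\alpha}h|^2=2\int_0^t T_{t-s,\alpha}\Gamma_{L^\alpha}(T_{s,\alpha}h)\,ds$, combined with weak*-continuity of each $T_{t,\alpha}$ on $BMO(L^\alpha)$, weak* lower semicontinuity of $\|\cdot\|_{L^\infty}$, and a Cauchy--Schwarz/polarization argument controlling $\Gamma_{L^\alpha}(T_{s,\alpha}h)$ under weak* limits. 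Everything else is a routine repackaging of Lemma~\ref{keybmo} and of the discussion preceding the statement.
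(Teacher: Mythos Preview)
Your proposal is correct and follows essentially the same route as the paper: the paper's proof is precisely the paragraph preceding the theorem, which truncates $a$ to $[1/M,N]$, replaces it by $\tilde a(t)=\operatorname{sign}\langle L^\alpha T_{t,\alpha}f,g\rangle\,a(t)$, applies Lemma~\ref{keybmo}, and lets $M,N\to\infty$ to obtain weak* convergence in $BMO(L^\alpha)$, then extends by weak* density. You actually supply more detail than the paper does on recovering the $bmo$-estimate (\ref{bmobmo}) at the limit via lower semicontinuity; the paper simply asserts ``The estimates are as in Lemma~\ref{keybmo}'' without further comment.
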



\begin{theorem}\label{main3}
Suppose $T_t=e^{-t{L}} $ is a weak* continuous  semigroup of positive contractions  on $L^\infty$ satisfying the $\Gamma^2\geq0$ criterion. Then $L$ has a complete bounded $H^\infty(S_\eta)$ calculus on $BMO(\sqrt L)$ for any $\eta> \frac\pi2$. 
\end{theorem}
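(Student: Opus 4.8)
The plan is to verify the hypotheses of Lemma \ref{Mc} for $A = L^\alpha$ on $BMO(L^\alpha)$, for a fixed $\alpha$ with $\tfrac12<\alpha<1$, and then transfer the conclusion back to $L$ on $BMO(\sqrt L)$ using the equivalences already established. The main engine is the preceding theorem: for any scalar function $a(t)$ with values in $\pm1$ (which in particular satisfies \eqref{a(t)} with $c_a$ bounded by an absolute constant, since $s\int_s^\infty v^{-2}\,dv = 1$), the operator $M_a(f) = \int_0^\infty a(t)\,L^\alpha T_{t,\alpha}f\,dt = \int_0^\infty a(t)\,\partial_t T_{t,\alpha}f\,dt$ is bounded on $BMO(L^\alpha)$ with norm $\lesssim (1-\alpha)^{-2}$. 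This is precisely the quantity $\int_0^\infty L^\alpha e^{-tL^\alpha} a(t)\,dt$ appearing in Lemma \ref{Mc} with $A = L^\alpha$, $X = BMO(L^\alpha)$.

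The steps, in order: First, note that $L^\alpha$ is a weak* densely defined sectorial operator of type $\tfrac{\alpha\pi}{2} < \tfrac\pi2$ on $BMO(L^\alpha)$ (it is the negative generator of the weak* continuous contraction semigroup $T_{t,\alpha}$, which indeed acts boundedly on $BMO(L^\alpha)$; one checks $T_{t,\alpha}$ is contractive there directly from the definition \eqref{BMOT}). Applying Lemma \ref{Mc} with the uniform bound from the preceding theorem, $L^\alpha$ has bounded $H^\infty(S_\eta)$-calculus on $BMO(L^\alpha)$ for every $\eta > \tfrac\pi2$; by the scaling $S_{\eta'}$ for $L$ versus $S_{\alpha\eta'}$ for $L^\alpha$, this even gives $L^\alpha$ bounded $H^\infty(S_{\alpha\eta})$-calculus. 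Second, use the standard composition principle for $H^\infty$-calculus: if $L^\alpha$ has bounded $H^\infty(S_{\theta})$-calculus then $L = (L^\alpha)^{1/\alpha}$ has bounded $H^\infty(S_{\theta/\alpha})$-calculus, because for $\Phi \in H^\infty(S_\eta)$ the function $z \mapsto \Phi(z^{1/\alpha})$ lies in $H^\infty(S_{\alpha\eta})$ with the same sup-norm, and $\Phi(L) = (\Phi\circ(\cdot)^{1/\alpha})(L^\alpha)$. Choosing $\alpha$ close enough to $1$ that $\eta' := \eta/\alpha$ is still $>\tfrac\pi2$ — for any given $\eta>\tfrac\pi2$ pick $\alpha \in (\tfrac{\pi}{2\eta},1)$ — yields bounded $H^\infty(S_\eta)$-calculus for $L$ on $BMO(L^\alpha)$. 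Third, invoke \eqref{BMO1/2}: for this $\alpha \in (\tfrac12,1)$ the spaces $BMO(L^\alpha)$ and $BMO(\sqrt L)$ coincide with equivalent norms, so bounded $H^\infty$-calculus transfers to $BMO(\sqrt L)$. Finally, the \emph{complete} boundedness follows because all arguments — the Kadison–Schwarz and Cauchy–Schwarz inequalities \eqref{cp}, \eqref{cauchy}, the $\Gamma^2$ criterion, and Lemma \ref{Mc}'s duality argument — are stable under tensoring with $M_n$ (one works with operator-valued $a(t)$ or, equivalently, matrix amplifications of everything in sight), since the semigroup is completely positive.

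I expect the main obstacle to be a bookkeeping issue rather than a conceptual one: the norm bounds from Lemma \ref{keybmo} degenerate like $(1-\alpha)^{-2}$ as $\alpha \to 1$, and \eqref{BMO1/2} costs another factor $(1-\alpha)^{-2}$, so for a \emph{fixed} target angle $\eta>\tfrac\pi2$ one must not let $\alpha\to1$ — instead $\alpha$ is pinned at $\max\{\tfrac12,\tfrac{\pi}{2\eta}\}+\varepsilon$ and all constants are then finite (depending on $\eta$). One must also be slightly careful that Lemma \ref{Mc} is stated for densely defined operators on a Banach space whereas $BMO(L^\alpha)$ is handled as a dual space with a weak* topology; this is exactly the situation addressed by the Remark after the definition of bounded $H^\infty$-calculus, so the weak* extension argument there, combined with the fact that $M_a$ was already shown to weak* converge and define a bounded operator on $BMO(L^\alpha)$, closes the gap. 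A final minor point is verifying that $L^\alpha$ as realized on $BMO(L^\alpha)$ is genuinely sectorial with the resolvent bounds (rather than just being a semigroup generator), which follows from the uniform boundedness of $T_{t,\alpha}$ on $BMO(L^\alpha)$ via the standard Laplace-transform representation of the resolvent.
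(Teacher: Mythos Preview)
Your proposal is correct and follows essentially the same route as the paper: apply Lemma~\ref{Mc} to $A=L^\alpha$ using the uniform $M_a$ bound on $BMO(L^\alpha)$, pass from $L^\alpha$ to $L$ via the composition $\Phi(L)=(\Phi\circ(\cdot)^{1/\alpha})(L^\alpha)$, choose $\alpha\in(\tfrac\pi{2\eta},1)$, transfer to $BMO(\sqrt L)$ by \eqref{BMO1/2}, and get complete boundedness by matrix amplification. The only cosmetic difference is that the paper makes the density issue explicit by passing to the norm closure $Y^\alpha$ of $D(L)$ in $BMO(L^\alpha)$ before applying Lemma~\ref{Mc}, then weak*-extends, whereas you invoke the Remark directly; your aside that Lemma~\ref{Mc} ``even gives $L^\alpha$ bounded $H^\infty(S_{\alpha\eta})$-calculus'' is not used and should be dropped, since Lemma~\ref{Mc} only yields angles $>\tfrac\pi2$ regardless of the type of $A$.
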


\begin{proof} Given $\alpha\in (\frac12,1)$, let $Y^\alpha$ be the norm closure of $D(L)$ in BMO$(L^\alpha)$. It is easy to check that $T_{t,\alpha}=e^{-t{ L}^\alpha} $  are contractions on   $Y^\alpha$.  Then $L^\alpha$ is a   densely defined sectorial operator of type $\frac\pi2$ in $Y^\alpha$.   Lemma \ref{Mc} and Lemma \ref{keybmo} imply that $L^\alpha$ has a   bounded $H^\infty(S_\eta)$ calculus on $Y^\alpha$ for any $\eta> \frac\pi2$.
Note  $\Phi(z)=\Psi(z^\frac1\alpha)\in S_ \eta $ if $\Psi\in S_\frac\eta\alpha$ and $\Phi(L^\alpha)=\Psi(L)$. We  conclude that $L$ has a   bounded $H^\infty(S_\eta)$ calculus on $Y^\alpha$ for any $\eta> \frac\pi{2\alpha}$. Given $\theta>\frac\pi2$, choose $\frac12<\alpha<1$ so that $\alpha\theta>\frac\pi2$. Then $L$ has a  bounded $H^\infty(S_\theta)$ calculus on $Y^\alpha$. Lemma \ref{equibmo} then implies that $L$ has a   bounded $H^\infty(S_\theta)$ calculus on $Y^\frac12\simeq Y^\alpha$ and on $BMO(\sqrt L)$ for any $\theta>\frac\pi2$, since $Y^\frac12$ is weak * dense in BMO$(\sqrt L)$ and $\Phi(L)$ is the weak* extension of its restriction on $Y^\frac12$ by definition.  The same argument applies to $id\otimes L$. We then obtain the completely bounded $H^\infty(S_\eta)$ calculus as well.
\end{proof}

\subsection{Imaginary Power and Interpolation.} Given $0<\alpha<1$, choose $\frac\pi2<\theta<\frac\pi{2\alpha}$. By (\ref{Phi(A)}), we have   the identities
\begin{eqnarray}
L^\alpha e^{-tL^\alpha}&=&\frac1{2\pi i}\psi(L)^{-1}\int_{\gamma_\theta} z^\alpha \psi(z) e^{-tz^\alpha} (z-L)^{-1}dz,\\
L^{i\alpha s}&=& \frac1{2\pi i}\psi(L)^{-1}\int_{\gamma_\theta} z^{i\alpha s}\psi(z)(z-L)^{-1} dz.
\end{eqnarray} 
Note 
\begin{eqnarray} 
z^{i\alpha s}&=&\Gamma(1-is)^{-1}\int_0^\infty t^{-is}z^\alpha e^{-tz^\alpha}dt.
\end{eqnarray}
Since these integrals converge absolutely, we can exchange the order of the integrations and get
\begin{eqnarray}
L^{i\alpha s}&=&\Gamma(1-is)^{-1}\int_0^\infty t^{-is}L^\alpha e^{-tL^\alpha}dt.
\end{eqnarray}
The inequality (\ref{LinftyBMO}}) of Lemma \ref{keybmo} implies that
$$\|L^{i\alpha s}f\|_{  BMO(L^\alpha)}\leq \frac c{(1-\alpha)^2}\Gamma(1-is)^{-1}\|f\|_{L^\infty}.$$
Thus for $\alpha>\frac12,$
\begin{eqnarray*}
\|L^{i s}f\|_{ BMO(L^\alpha)}&\leq& c\Gamma\left(1-i\frac s{\alpha}\right)^{-1}\|f\|_{L^\infty}\\
&\leq& \frac c{(1-\alpha)^2(1+|s|)^{\frac12}}\exp\left(\frac {\pi |s|}{2\alpha}\right)\|f\|_{L^\infty}.
\end{eqnarray*}
Choosing $\alpha=\frac {|s|}{|s|+1}$ for $s$ large, we get
\begin{eqnarray}
\|L^{i s}f\|_{ BMO(L^\alpha)}\leq  c(1+|s|)^{\frac32}\exp\left(\frac {\pi |s|}{2}\right)\|f\|_ {L^\infty}.\label{Lis}
\end{eqnarray}
The same estimate holds with bmo$(L^\alpha)$-norms putting on both sides of (\ref{Lis}) because we can apply the same argument to   (\ref{bmobmo}) of Lemma \ref{keybmo} instead of (\ref{LinftyBMO}). 
Applying the inequality (\ref{alpha<beta}) to (\ref{Lis}), we  get
\begin{eqnarray}
\|L^{i s}f\|_{ BMO(\sqrt L)}\leq  c(1+|s|)^{\frac32}\exp\left(\frac {\pi |s|}{2}\right)\|f\|_ {L^\infty}.\label{Lis12}
\end{eqnarray}
Applying the inequalities (\ref{BMOBMO}), (\ref{BMO1/2})  instead of (\ref{LinftyBMO}), (\ref{alpha<beta}), we will have similarly 
\begin{eqnarray}
\|L^{i s}f\|_{ BMO(\sqrt L)}\leq  c(1+|s|)^{\frac92}\exp\left(\frac {\pi |s|}{2}\right)\|f\|_ {BMO(\sqrt L)}.\label{LisBMO12}
\end{eqnarray}

\begin{definition} We say a weak* continuous   semigroup of positive contractions is a {\it symmetric  Markov} semigroup if  $\langle  T_tf,g\rangle=\langle  f,T_tg\rangle$ for $f\in L^\infty,g\in L^1$ and it admits a standard Markov dilation in the sense of \cite[page 717]{JM12}.
\end{definition}

\noindent{\bf Remark.}  The Markov dilation assumption in the above definition holds automatically in many cases. In the commutative case (i.e the underlying von Neumann algebra   $\M=L^\infty(M)$), this is due to Rota (see \cite[page 106, Theorem 9]{St70}). Therefore every weak* continuous semigroup of unital symmetric positive contractions is automatically a symmetric Markov semigroup.  In \cite{R08} it is proven that this is the case  for convolution semigroups on group von Neumann algebras.  In \cite{Da,JRS} it is proven that this holds for the finite von Neumann algebras case. The case of a general semifinite von Neumann algebra is conjectured but there has not been a written proof.

 \begin{lemma} \label {JM12}([JM12]) Assume that $T_t=e^{-tA}$ (e.g. $A=L^\alpha$) is a    symmetric Markov semigroup on a semifinite von Neumann algebra $\M$. Then, the following interpolation result holds
$$ [BMO(A),L^1_0(\M)]_{\frac{1}{p}} = L^p_0(\M) $$
for
 $1<p<\infty$.   Here $L^p_0(\M)=L^p(\M)/ker A$.  
\end{lemma}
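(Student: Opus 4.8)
The plan is to deduce the interpolation identity from known results about symmetric Markov semigroups, reducing to the Poisson case $A^{1/2}$ where the BMO space has the most structure and then transferring. First I would recall the setup from \cite{JM12}: for a symmetric Markov semigroup with a standard Markov dilation there is an associated $H^1$--BMO duality, a Fefferman--Stein type decomposition, and good John--Nirenberg estimates, all phrased in the noncommutative $L^p$ framework of \cite{JMX06}. The key structural inputs are: (i) $\mathrm{BMO}(A)$ embeds into an intersection of row and column BMO spaces built from the dilation, which themselves interpolate with $L^2$ by a result of \cite{JM12} (an abstract interpolation theorem for BMO spaces coming from filtrations/martingales); (ii) the square function characterization of $L^p_0(\M)$ in terms of $\Gamma$ and the semigroup, i.e. the Littlewood--Paley--Stein inequalities valid on symmetric Markov semigroups; and (iii) complex interpolation between $L^2_0$ and $L^1_0$ gives $L^p_0$ for $1<p<2$, while the $p>2$ range follows by duality $\mathrm{H}^1$--BMO together with the reiteration theorem.

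The main steps, in order, would be: (1) Establish $[\mathrm{BMO}(A),L^2_0(\M)]_{\theta}=L^{p}_0(\M)$ with $\tfrac1p=\tfrac{1-\theta}{2}$ for the Poisson semigroup $P_t=e^{-tA^{1/2}}$, using the dilation to view $\mathrm{BMO}(A)=\mathrm{BMO}(A^{1/2})$ (these are equivalent for symmetric Markov semigroups — this is where one invokes the analogue of Lemma \ref{equibmo} and the fact that the square function of $P_t$ controls that of $T_t$) and then applying the martingale-BMO interpolation of \cite{JM12}, transferred along the continuous dilation by the standard discretization/averaging argument. (2) Pass from $[\mathrm{BMO},L^2_0]$ to $[\mathrm{BMO},L^1_0]$ by Wolff-type reiteration: since $L^2_0=[L^1_0,\mathrm{BMO}]_{1/2}$ is itself an interpolation space, the identity $[\mathrm{BMO},L^1_0]_{1/p}=L^p_0$ for all $1<p<\infty$ follows once one knows it on a single intermediate scale together with $[L^1_0,L^2_0]_{\theta}=L^p_0$ (Riesz--Thorin for noncommutative $L^p$). (3) Check the compatibility of the couple: $\mathrm{BMO}(A)$ and $L^1_0(\M)$ form a compatible Banach couple inside, say, the space of $A$-harmonic-BMO distributions, and the density of $L^2_0\cap\mathrm{BMO}$ needed for the complex method holds because $L^\infty_0$ is weak* dense in $\mathrm{BMO}$ and norm dense in $L^p_0$.

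The hard part, I expect, will be step (1): making the transfer from the discrete martingale-BMO interpolation theorem to the continuous semigroup-BMO precise, since the Markov dilation produces a filtration indexed by a continuum and one must control the BMO norm under conditional expectations uniformly, then identify $\lim$ of dyadic averages. This is exactly the technical heart of \cite{JM12} and I would lean on it as a black box — the honest statement of the lemma is that it holds \emph{because} it was proved there — rather than reprove it. A secondary subtlety is that $L^p_0(\M)=L^p(\M)/\ker A$ must be treated consistently on both ends: one should verify that $\ker A$ (equivalently the fixed-point algebra of the semigroup) is complemented compatibly, so that quotient interpolation commutes with the couple, which is routine given that the projection onto $\ker A$ is the weak* limit $\lim_{t\to\infty}T_t$ and is simultaneously bounded on all $L^p$ and on $\mathrm{BMO}$.
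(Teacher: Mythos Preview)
The paper does not prove this lemma at all: it is stated with the attribution ``[JM12]'' and simply quoted from \cite{JM12} as an input to the interpolation argument that follows. Your proposal, which ultimately defers to \cite{JM12} as a black box, is therefore in line with the paper's own treatment, and the sketch you give of the machinery behind it (Markov dilation, martingale-BMO interpolation, Wolff reiteration) is a reasonable summary of what happens in that reference.

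One caution on your sketch: you invoke $\mathrm{BMO}(A)\simeq\mathrm{BMO}(A^{1/2})$ via ``the analogue of Lemma~\ref{equibmo}'', but in this paper that equivalence is only established for $0<\alpha,\beta<1$ under the $\Gamma^2\geq 0$ hypothesis, and the case $\alpha=1$ is shown to fail in general (see the Remark after Lemma~\ref{equibmo} and Example~3). The interpolation theorem in \cite{JM12} is proved directly for the semigroup BMO without passing through such an equivalence, so this detour is both unnecessary and potentially unjustified in the generality of the lemma as stated.
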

\medskip

Since $\|L^{i s}\|_{L^2\rightarrow L^2}=1$ if $L$ generates a symmetric Markov semigroup, by interpolation, we get from (\ref{Lis12}) the following result.
\begin{corollary}\label{correctLis}
Suppose $T_t=e^{-t{L}} $ is a symmetric Markov semigroup of operators on a semifinite von Neumann algebra $\M$ and satisfies the $\Gamma^2\geq0$ criterion. Then, $L$ has the completely bounded $H^\infty(S_\eta )$-calculus on $L^p$ for any $\eta>\omega_p=|\frac\pi2-\frac\pi p| , 1<p<\infty$ and 
\begin{eqnarray}\label{Lisp}
\|L^{i s}\|_{L^p\rightarrow L^p}\leq  c(1+|s|)^{|\frac32-\frac3p|}\exp\left(|\frac{\pi s}2 -\frac{\pi s}p|\right),
\end{eqnarray}
for all $1<p<\infty$.
\end{corollary}
{\bf \it Remark.} Let us point out that  the left-hand side of the inequality  on \cite[line 4, page 728]{JM12} misses a ``$\frac12$". It should be $\|L^{\frac{is}2}f\|$ instead of  $\|L^{{is}}f\|$, because Theorem 3.3 of \cite{JM12} is for the semigroup generated by $\sqrt L$. So the estimate of the constants $c_{s,p}$ given in \cite[Corollary 5.4]{JM12} is not correct. Also \cite{JW17} contains a similar estimate to (\ref{Lisp}) without assuming the $\Gamma^2\geq0$ criterion. Their method is   the transference principle and works for $L^p$ only.

Junge, Le Merdy, and Xu (\cite{JMX06})   studied the   $H^\infty$-calculus in the noncommutative setting. In particular, they prove a $H^\infty(S_\eta)$-calculus property of ${  L}:\lambda_g\mapsto |g|\lambda_g$   on $L^p(\hat {\mathbb F}_n)$    for all $1<p<\infty, \eta> |\frac\pi2-\frac\pi p| $. Here $L^p(\hat {\mathbb F}_n)$  is the noncommutative $L^p$-space associated with the free group von Nuemann algebra. 


\section{Examples}

The ``$\Gamma^2\geq0$" criterion  is known to be satisfied by a large class of semigroups including the heat, Ornstein-Uhlenbeck, and Jacobi semigroups (see \cite{Ba96}). The results proved in this article apply to all of them. The main example in the noncommutative setting, is the   semigroup of operators on a group von Neumann algebra, generated from a conditionally negative function on the underlying group (see Example 4). We will analyze a few of them in the following.

 \begin{example} Let $-L=\Delta$ be the Laplace-Beltrami operator on a complete Riemannian manifold with  nonnegative Ricci curvatures. Then  the associated heat semigroup $T_t=e^{-tL}$  is symmetric Markovian and satisfies the $\Gamma^2\geq0$ criterion. 
  All the theorems of this article hold for $L$, and it has bounded $H^\infty(S_\eta)$  calculus on  $BMO(\sqrt L)$ for any $\eta>\frac\pi2$.
  
  In the special case that $L=-\partial_x^2$ the Laplacian on  Euclidean space   ${\Bbb R}^n$, the BMO$(L )$,  bmo$(L ),  $ and BMO$(\sqrt L)$ spaces are all equivalent to  the classical BMO space of all functions $f\in L^1({\Bbb R}^n,\frac1{1+|x|^2}dx)$ with a finite BMO norm,
 \begin{eqnarray*}
 \|f\|_{BMO({\Bbb R}^n)}=\sup_{B \subset {\Bbb R}^n} (E_B|f-E_Bf|^2)^\frac12<\infty.
 \end{eqnarray*}
 Here the supremum runs on all balls (or cubes) in ${\Bbb R}^n$ and $E_B=\frac1{|B|}\int_Bfdx$ denotes the mean value operator. This can be verified by the integral representation  of $T_t, T_{t,\frac12}$,
the convexity of $|\cdot|^2$ and the fact that $|E_Bf-E_{kB}f|\lesssim  \log k\|f\|_{BMO({\Bbb R}^n)}$. By Lemma \ref{equibmo} we then get the  equivalence between BMO$({\Bbb R}^n)$ and BMO$(L^\alpha)$ for all $0<\alpha\leq 1$.

 \end{example}
\begin{example} Let $L=\partial_x$ on ${\Bbb R}$. Then   $T_t=e^{-tL}$ is the translation operator sending $f(\cdot)$ to $f(\cdot-t)$. It is a Markov semigroup and the $\Gamma^2\geq0$ criterion holds trivilly. The BMO$(L)$ space is equivalent to $L^\infty_0$ and the bmo$(L)$ (semi)norm vanishes. 
 For any $0<\alpha<1$,  BMO$(L^\alpha)$ is equivalent to the classical BMO$({\Bbb R}^n)$ space.
 Indeed, by the subordination formula, we get
 the following integral representation for $T_{t,\frac12}=e^{-t\sqrt L}$: 
  \begin{eqnarray*}
T_{t,\frac12}f(x) =\frac1{2\sqrt \pi}\int_0^\infty f(x-s) te^{-\frac{t^2}{4s}}s^{-\frac32} ds. 
 \end{eqnarray*}
 From this, it is easy to check that, for $I_{x,k}=[x-2^k\frac{t^2}4,x-2^{-k}\frac{t^2}4], k\in {\Bbb N}$,
   \begin{eqnarray*}
c^{-1}E_{I_{x,1}}|f| \leq T_{t,\frac12}|f|(x) \leq c\sum_{k} 2^{-\frac k2}E_{I_{x,k}}|f|. 
 \end{eqnarray*}
After an elementary calculation  and using the fact that $$|E_Bf-E_{kB}f|\lesssim  \log k\|f\|_{BMO({\Bbb R}^n)},$$ one can see that $ \|\cdot\|_{BMO(\sqrt L)}\simeq \|\cdot\|_{BMO({\Bbb R}^n)}$, thus $\|\cdot\|_{BMO( L^\alpha)}\simeq  \|\cdot\|_{BMO({\Bbb R}^n)}$ for all $0<\alpha<1$ by Lemma \ref{equibmo}. 

 By Theorem \ref{main3}, $L$ has $H^\infty(S_\eta)$-calculus on $BMO(\sqrt L)\simeq BMO({\Bbb R}^n)$ for any $\eta>\frac\pi2$.  It is easy to see that $$L^{is}=P_+e^{\frac{-s\pi}2}\Delta^{\frac {is}2}+P_-e^{\frac{s\pi}2}\Delta^{\frac {is}2}.$$ 
 So $L$ does not have $H^\infty(S_\theta)$-calculus on $BMO(L)\simeq L^\infty({\Bbb R})/{\Bbb C}$ for any positive $\theta$ and $$\|L^{is}\|_{BMO\rightarrow BMO}\simeq e^{\frac {\pi|s|}2}\|\Delta^{\frac {is}2}\|_{BMO\rightarrow BMO}$$ for $|s|$ large. This   indicates that it is better to  consider $BMO({\sqrt L})$ instead of $BMO(L)$ for the purpose of this article.

\end{example}
\begin{example} Let $-L=\frac {\partial_x^2}2-x\cdot\partial_x$ be the Ornstein-Uhlenbeck operator on $({\Bbb R}^n,e^{-{|x|^2}}dx)$.  Let  $O_tf=O_{t,1}=e^{-tL}.$ $O_t$ is a symmetric Markov semigroup with respect to the Gaussian measure $d\mu=e^{-|x|^2}dx$ and satisfies the $\Gamma^2\geq0$ criterion. Theorem \ref{main3} says that $L=-\frac {\partial_x^2}2+x\cdot\partial_x$ has bounded $H^\infty(S_\eta)$-calculus on $BMO(\sqrt L)$ for any $\eta>\frac\pi2$.

Mauceri and Meda (see \cite{MM07}) introduced the following BMO space for the Ornstein-Uhlenbeck semigroup
  \begin{eqnarray}\label{MM}
  \|f\|_{BMO(MM)}=\sup_{r_B\leq \min\{1,\frac1{|c_B|}\}}(E^\mu_B|f-E^\mu_Bf|^2)^\frac12,
  \end{eqnarray}
  with  $r_B,c_B$ the radius and the center of $B$, and $E^\mu_B=\frac1{\mu(B)}\int \cdot d\mu$ the mean value operator with respect to the Gaussian measure $d\mu$.  Note, for the balls $B$ satisfying $r_B\leq \min\{1,\frac1{|c_B|}\}$, we have the equivalence $E^\mu_B|f|\simeq E_B|f|$. One may replace  $E^\mu_B$ by   $E_B$, the mean value operator with respect to the Lebesque measure $dx$ in (\ref{MM}).  The resulted BMO norms are equivalent to each other.
From the integral presentation 
\begin{eqnarray}
O_t(f)=\frac1{(\pi-\pi e^{-2t})^\frac n2}\int_{{\Bbb R}^n} \exp\left(-\frac{|e^{-t}x-y|^2}{1-e^{-2t}}\right)f(y)dy,
\end{eqnarray}
one easily see that, for $t\leq4$ and $\sqrt t |x|\leq1$,  
 \begin{eqnarray}\label{Ot>EB}
  O_t|f| (x)&\geq& \frac1{(\pi-\pi e^{-2t})^\frac n2}\int_{B(x,\sqrt t)} \exp\left(-\frac{2|x-y|^2}{1-e^{-2t}}\right)f(y)dy\nonumber\\
  &\geq& c_nE_{B(x,\sqrt t)}|f|(x).
  \end{eqnarray}
  Note $E_{B(x,\sqrt t)}|f|\leq c_nE_{B(x,\sqrt s)}|f|$ for all $t<s<2t$. We then have from (\ref{Ot>EB}) that, for $O_{t,\frac12}=e^{-tL^\frac12}, t\leq1, tx\leq1$,  
   \begin{eqnarray*}
  O_{t,\frac12}|f| (x)=\int_0^\infty  O_s|f|(x)\phi_{t,\frac12}(s)ds\geq  \frac c{\sqrt t}\int_{t^2} ^{ 4t^2}  O_s|f|(x)ds\geq c_nE_{B(x,t)}|f|(x).
  \end{eqnarray*}
    We then easily get
   \begin{eqnarray}\label{MM<L}
   4O_{t,\alpha}|f-O_{t,\alpha}f|^2 (x) \geq c_nE_{B(x, t^\frac1{2\alpha})}|f-E_{B(x,  t^\frac1{2\alpha})} f(x)|^2(x),
  \end{eqnarray}
   by the convexity of $|\cdot|^2$, for $\alpha=\frac12,1$. Therefore,   $$\|\cdot\|_{BMO(MM)}\lesssim\|\cdot\|_{BMO(L)},\|\cdot\|_{bmo(L)},\|\cdot\|_{BMO(\sqrt L )} ,$$ and by Lemma \ref{equibmo},  $$\|\cdot\|_{BMO(MM)}\lesssim\|\cdot\|_{BMO(L^\alpha)} $$ for all $0<\alpha\leq 1$.  
  By Theorem \ref{main3}, the Ornstein-Uhlenbeck operator $ L=-\frac {\partial_x^2}2+x\cdot\partial_x$ has bounded $H^\infty(S_\eta)$ calculus from $L^\infty({\Bbb R}^n)$  to Mauceri-Meda's BMO$(MM)$ for any $\eta>\frac\pi2$.

Let $f(y)=\frac1{\sqrt{4\pi s}}   \exp(-\frac{|y|^2}{4s})$, with $s>100$. We have 
\begin{eqnarray*}
  &&(O_t|f|^2-|O_tf|^2)(x)\\
  &=&\frac1{4\pi\sqrt { (s+2v)   s}}\exp\left(-\frac{|e^{-t}x|^2}{2s+4v}\right)-\frac1{  {4\pi (s+v)}}\exp\left(-\frac{|e^{-t}x|^2}{2s+2v}\right)\\
  &=& (\frac1{4\pi\sqrt { (s+2v)   s}}-\frac1{  {4\pi (s+v)}})\exp\left(-\frac{|e^{-t}x|^2}{2s+4v}\right)\\
  &&\hskip 2cm +\frac1{  {4\pi (s+v)}}\left(\exp\left(-\frac{|e^{-t}x|^2}{2s+4v}\right)-\exp\left(-\frac{|e^{-t}x|^2}{2s+2v}\right)\right)\\
  &\lesssim&\frac1{s^3}+\frac1{s^2}\lesssim\frac1{  s^2}.
\end{eqnarray*}
On the other hand, for $v=\frac{1-e^{-2t}}4,v'=\frac{1-e^{-4t}}4$,
\begin{eqnarray*}
  (O_t f - O_{2t}f)(x) = \frac1{\sqrt{4\pi(s+v)}}e^{-\frac{|e^{-t}x|^2}{4s+4v}}-\frac1{\sqrt{4\pi(s+v')}}e^{-\frac{|e^{-2t}x|^2}{4s+4v'}}
  \end{eqnarray*}
  For $x^2=e^{2t}(4s+4v),t=10$, we get
  \begin{eqnarray*}
 |( O_t f - O_{2t}f)(x)|&\geq&| \frac1{\sqrt{4\pi(s+v)}}e^{-1}-\frac1{\sqrt{4\pi(s+v')}}e^{- \frac1{100}}|\\
 &\geq& \frac1{2\sqrt{4\pi(s+v')}} \geq \frac1{10\sqrt s}.
  \end{eqnarray*}
  So, $$\|f\|_{BMO(L)}\geq \sup_{t>0} \| O_t f - O_{2t}f\|_{L^\infty}\geq\frac{  \sqrt s} 5\|f\|_{bmo(L)}.$$
Therefore, the   BMO$(L)$ and bmo$(L)$-norms are not equivalent for the Ornstein-Uhlenbeck semigroup, by letting $s\rightarrow \infty$. This shows that one can not extend Lemma \ref{equibmo} to the case of $\alpha=1$.

\end{example}
\begin{example} Let $(G,\mu)$ be a locally compact unimodular group with its Haar measure. Let $\lambda_g, g\in G$ be the translation-operator on $L^2(G)$ defined as
$$\lambda_g(f)(h)=f(g^{-1}h) .$$
  The so-called group von Neumann algebra ${  L}^\infty(\hat G)$  is the weak* closure  in $B(L_2(G))$ of the operators $f=\int_G \hat f(g)\lambda_gd\mu(g)$ with $\hat f\in C_c(G).$
The canonical trace $\tau$ on $  L^\infty(\hat G)$ is defined as
$\tau f=\hat f(e)$.
If $G$ is abelian, then ${  L}^\infty(\hat G)$ is the canonical $L^\infty$ space of functions on the dual group $\hat G$. In particular, if $G={\Bbb Z}$, the integer group, then $\lambda_k=e^{ikt}, k\in {\Bbb Z}$ and ${  L}^p(\hat{\Bbb Z})=L^p({\Bbb T})$, the function space on the unit circle. Please refer to \cite{PX03} for details on noncommutative $L^p$ spaces.

Let $\varphi$ be a scalar valued function on $G$. We say $\varphi$ is {\it conditionally negative} if $\varphi(g^{-1})=\varphi(g)^*$ and
\begin{eqnarray}\label{CN}
 \sum_{g,h}\overline{a_g}a_h\varphi(g^{-1}h)\leq0
\end{eqnarray}
for any finite collection of coefficients $a_g\in {\Bbb C}$ with $\sum_g a_g=0$.
Sch\"oenberg's
theorem says that $$T_t: \lambda_g=e^{-t\varphi(g)}\lambda_g$$ extends to a  Markov semigroups of operators on the group von Neumann algebra ${  L}^\infty(\hat G)$ if and only if $\varphi$ is a  conditionally negative function with $\varphi(e)=0$. The negative generator of the semigroup is the unbounded map $$L:\la_g\mapsto \varphi(g)\la_g$$ which is weak* densely defined on ${  L}^\infty(\hat G)$.

Let $K_{\varphi}(g,h)=\frac12(\varphi(g)+\varphi(h)-\varphi(g^{-1}h))$, the Gromov form associated with $\varphi$. Then one can directly verify from (\ref{CN}) that $K_{\varphi}$ is a positive definite function on $G\times G$. Thus $K_{\varphi}^2$ is a positive definite function too. This is equivalent to the $\Gamma^2\geq0$ criterion for $T_t$, 
and therefore Theorem \ref{main3} applies to all such $(T_t)_t$'s. If in addition, $\varphi$ is real valued, then $(T_t)$ is a symmetric Markov semigroup. We then obtain the following corollary.

\begin{corollary}\label{last} Let $G$ be a locally compact unimodular group. Suppose $\varphi $ is  a  conditionally negative function on $G$ with $\varphi(e)=0$. Let  $L$ be the weak* densely defined linear map on ${  L}^\infty(\hat G)$ such that $L(\la_g)= \varphi(g)\la_g$. Then, 

(i) For any $\eta>\frac\pi2$ and any bounded analytic $\Phi$ on $S_\eta$, the map $\Phi(L): \la_g\mapsto \Phi(\varphi(g))\la_g$ extends to a completely bounded operator on BMO($\sqrt L)$ and $\|\Phi(L)\|\leq C_\eta\|\Phi\|_{\infty}$.

(ii) Suppose in addition that $\varphi$ is real valued. If $\Phi$ is a bounded analytic function on $S_\eta$ with $\eta>|\frac\pi2-\frac\pi p|$, then the map $\Phi(L)$ extends to a completely bounded operator on   ${  L}^p(\hat G)$ for $1<p<\infty$.
   
\end{corollary}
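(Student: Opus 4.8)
The plan is to read both parts off the structural theorems already proved: the semigroup $T_t\colon\lambda_g\mapsto e^{-t\varphi(g)}\lambda_g$ is precisely of the kind to which Theorem \ref{main3} applies (and, when $\varphi$ is real, also the $L^p$-corollary preceding this one), and the abstractly defined $\Phi(L)$ then has to be matched with the Fourier multiplier $\lambda_g\mapsto\Phi(\varphi(g))\lambda_g$.

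First I would check the hypotheses. By Sch\"oenberg's theorem, recalled above, $(T_t)$ is a weak* continuous semigroup of unital completely positive (hence contractive) maps on $L^\infty(\hat G)$; weak* continuity at $0+$ is immediate from $e^{-t\varphi(g)}\to1$ together with uniform boundedness, and the negative generator is $L\colon\lambda_g\mapsto\varphi(g)\lambda_g$. A direct computation on the weak* dense span of $\{\lambda_g\}$ gives $2\Gamma_L(\lambda_g,\lambda_h)=\big(\varphi(g)^{*}+\varphi(h)-\varphi(g^{-1}h)\big)\lambda_{g^{-1}h}$, so $\Gamma_L$ is the Schur multiplier with symbol the Gromov form $K_\varphi$; since $K_\varphi$ is positive definite on $G\times G$, so is $K_\varphi^{2}$, which is exactly the $\Gamma^{2}\geq0$ inequality (\ref{Gamma2}) for $(T_t)$, as already noted before the corollary. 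Theorem \ref{main3} therefore yields that $L$ has completely bounded $H^\infty(S_\eta)$-calculus on $BMO(\sqrt L)$ for all $\eta>\pi/2$, with $\|\Phi(L)\|\leq C_\eta\|\Phi\|_{\infty}$.

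It remains to identify $\Phi(L)$ with the asserted multiplier. Since each $T_t$ is a contraction and $\|\lambda_g\|=1$ we have $|e^{-t\varphi(g)}|\leq1$ for all $t>0$, hence $\operatorname{Re}\varphi(g)\geq0$ and $\varphi(g)\in\overline S_{\pi/2}$. On the span of $\{\lambda_g\}$ the resolvent acts diagonally, $(z-L)^{-1}\lambda_g=(z-\varphi(g))^{-1}\lambda_g$, so inserting this into (\ref{PhiA}) and computing the contour integral by the residue theorem gives $\Phi(L)\lambda_g=\Phi(\varphi(g))\lambda_g$ for $\Phi\in H_0^\infty(S_\eta)$; the definition (\ref{Phi(A)}) then propagates this to all $\Phi\in H^\infty(S_\eta)$ (working on $L^\infty_0(\hat G)$, equivalently replacing $L$ by $L+\varepsilon$, so that $L$ is one-to-one with dense range). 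With the norm control from Theorem \ref{main3}, and the same argument applied to $\mathrm{id}\otimes L$ on the amplifications $L^\infty(\hat G)\otimes M_n$, this proves (i). For (ii), if $\varphi$ is real valued then $\langle T_tf,g\rangle=\langle f,T_tg\rangle$, so $(T_t)$ is trace-symmetric, and convolution semigroups on group von Neumann algebras admit a standard Markov dilation; hence $(T_t)$ is a symmetric Markov semigroup in the sense used here and still satisfies $\Gamma^2\geq0$. The $L^p$-corollary stated just before this one then applies verbatim, giving completely bounded $H^\infty(S_\eta)$-calculus on $L^p(\hat G)$ for every $\eta>|\tfrac\pi2-\tfrac\pi p|$, $1<p<\infty$, together with the bound (\ref{Lisp}) on $L^{is}$.

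There is no genuine obstacle: the mathematical content sits in Theorem \ref{main3} and the preceding $L^p$-corollary, and what is left is bookkeeping. The only points worth spelling out are the residue computation identifying the functional calculus of the diagonal operator $L$ with the multiplier $\lambda_g\mapsto\Phi(\varphi(g))\lambda_g$, and the observation --- already made in the text --- that positive-definiteness of $K_\varphi^{2}$ is literally the $\Gamma^2\geq0$ condition for these semigroups.
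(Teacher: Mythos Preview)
Your proposal is correct and follows essentially the same route as the paper: verify the $\Gamma^2\geq0$ criterion via positive-definiteness of the Gromov form (and hence of its square), invoke Theorem~\ref{main3} for part~(i), and in the real-valued case appeal to the Markov dilation result of \cite{R08} and the preceding $L^p$-corollary for part~(ii). You have actually spelled out one point the paper leaves implicit, namely the diagonal resolvent/residue identification of the abstract $\Phi(L)$ with the Fourier multiplier $\lambda_g\mapsto\Phi(\varphi(g))\lambda_g$.
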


\noindent{\bf Remark.} Corollary \ref{last} (i) was proved in \cite{MS17} for $L: \la_g\mapsto  \sqrt {\varphi(g)}\la_g$ with  $\varphi$ a symmetric conditionally negative function on $G$.

\end{example}
 \begin{example} Let $G={\Bbb F}_\infty$ be the nonabelian free group with a countably infinite number of generators. Let $|g|$ be the reduced word length of $g\in G$. Then $\varphi:g\rightarrow |g|$ is a conditionally negative function (see \cite{Ha79}) and $L:\la_g\mapsto |g|\la_g$ generates a symmetric Markov semigroup on the free group von Neumann algebra. Fix $\theta\in(\frac\pi2,\pi)$, let $\Phi(z)=(\ln (z+2))^{-1}$ for $z\in S_\theta$. Then $\Phi\in H^\infty(S_\theta)$. 
  Corollary \ref{last} then implies that the Fourier multiplier 
 $$\la_g\mapsto \frac1{\ln (|g|+2)}\la_g $$ extends to a bounded operator on BMO$(\sqrt L)$. By the interpolation result Lemma \ref{JM12}, we conclude that this multiplier is bounded on $L^p(\hat {\Bbb F}_\infty)$ with constant $\lesssim \frac {p^2}{p-1}$.
 
 This produces a slowly decreasing multiplier which is bounded on $L^p(\hat {\Bbb F}_\infty)$ for all $1<p<\infty$. Note that Bo\.zejko and Fendler disproved the uniform $L^p$ boundedness of the $\ell_1$-length projections $P_N$, that map  $\la_g$ to $\chi_{\{g: |g|<N\}}\la_g$, for all $p>3$ (see \cite{BF06}). So  the classical method of producing slowing decreasing  $L^p$-multipliers through $P_N$ fails on free groups.   
 \end{example}
 \medskip
 

\medskip
\noindent {\bf Acknowledgement. } The second author is thankful to A. Mcintosh, P. Portal and L. Weis for helpful discussions and for introducing him the theory of bounded $H^\infty$-calculus.
   \bibliographystyle{amsplain}

\bigskip
\hfill \noindent \textbf{Tim Ferguson} \\
\null \hfill Department of Mathematics
\\ \null \hfill   University of Alabama \\
\null \hfill Box 870350\\
\null \hfill Tuscaloosa, AL, 35487-0350 USA\\
\null \hfill\texttt{ tjferguson1@ua.edu}

\bigskip
\hfill \noindent \textbf{Tao Mei} \\
\null \hfill Department of Mathematics
\\ \null \hfill Baylor University \\
\null \hfill One bear place \#97328\\
\null \hfill  Waco, TX  76798, USA \\
\null \hfill\texttt{tao\_mei@baylor.edu}

\bigskip
\hfill \noindent \textbf{Brian Simanek} \\
\null \hfill Department of Mathematics
\\ \null \hfill Baylor University \\
\null \hfill One bear place \#97328\\
\null \hfill   Waco, TX  76798, USA \\
\null \hfill\texttt{brian\_simanek@baylor.edu}
\end{document}